\theoremstyle{plain}
\newtheorem{thm}{Theorem}[section]
\newtheorem{theorem}[thm]{Theorem}
\newtheorem{lem}[thm]{Lemma}
\newtheorem{cor}[thm]{Corollary}
\newtheorem{prop}[thm]{Proposition}
\theoremstyle{definition}
\theoremstyle{remark}
\newtheorem*{remark*}{Remark}
\numberwithin{equation}{section}
        \newcommand{\field}[1]{{\mathbb{#1}}}
        \newcommand{\NN}{\field{N}}
        \newcommand{\ZZ}{\field{Z}}
        \newcommand{\RR}{\field{R}}
        \newcommand{\CC}{\field{C}}
\begin{document}

\title[Semiclassical asymptotic expansions]{Semiclassical asymptotic expansions for functions of the Bochner-Schr\"odinger operator}

\author[Y. A. Kordyukov]{Yuri A. Kordyukov}
\address{Institute of Mathematics, Ufa Federal Research Centre, Russian Academy of Sciences, 112~Chernyshevsky str., 450008 Ufa, Russia} \email{yurikor@matem.anrb.ru}

%\thanks{Partially supported by the development program of the Regional Scientific and Educational Mathematical Center of the Volga Federal District, agreement N 075-02-2020-1478.}

\subjclass[2000]{Primary 58J37; Secondary 35P20}

\keywords{Bochner-Schr\"odinger operator, trace formula, semiclassical asymptotics, manifolds of bounded geometry}

\begin{abstract}
The Bochner-Schr\"odinger operator $H_{p}=\frac 1p\Delta^{L^p\otimes E}+V$ on tensor powers $L^p$ of a Hermitian line bundle $L$ twisted by a Hermitian vector bundle $E$ on a Riemannian manifold of bounded geometry is studied. For any function $\varphi\in \mathcal S(\mathbb R)$, we consider the bounded linear operator $\varphi(H_p)$ in $L^2(X,L^p\otimes E)$ defined by the spectral theorem and describe an asymptotic expansion of its smooth Schwartz kernel in a fixed neighborhood of the diagonal in the semiclassical limit $p\to \infty$. In particular, we prove that the trace of the operator $\varphi(H_p)$ admits a complete asymptotic expansion in powers of $p^{-1/2}$ as $p\to \infty$. We also prove a result on the asymptotic  localization of the Schwartz kernel of the spectral projection on the diagonal in the case when the curvature is of full rank. 
\end{abstract}

\date{\today}

 \maketitle
%\tableofcontents
\section{Introduction}
Let $(X,g)$ be a complete Riemannian manifold of dimension $d$, $(L,h^L)$ a Hermitian line bundle on $X$ with a Hermitian connection $\nabla^L$ and $(E,h^E)$ a Hermitian vector bundle on $X$ with a Hermitian connection $\nabla^E$. We suppose that $(X, g)$ is a Riemannian manifold of bounded geometry and the bundles $L$ and $E$ have bounded geometry. This means that the curvatures $R^{TX}$, $R^L$ and $R^E$ of the Levi--Civita connection $\nabla^{TX}$, connections $\nabla^L$ and $\nabla^E$, respectively, and their covariant derivatives of any order are uniformly bounded on $X$ in the norm induced by $g$, $h^L$ and $h^E$, and the injectivity radius $r_X$ of $(X, g)$ is positive. 

For any $p\in \NN$, let $L^p:=L^{\otimes p}$ be the $p$th tensor power of $L$ and let
\[
\nabla^{L^p\otimes E}: {C}^\infty(X,L^p\otimes E)\to
{C}^\infty(X, T^*X \otimes L^p\otimes E)
\] 
be the Hermitian connection on $L^p\otimes E$ induced by $\nabla^{L}$ and $\nabla^E$. Consider the induced Bochner Laplacian $\Delta^{L^p\otimes E}$ acting on $C^\infty(X,L^p\otimes E)$ by
\begin{equation}\label{e:def-Bochner}
\Delta^{L^p\otimes E}=\big(\nabla^{L^p\otimes E}\big)^{\!*}\,
\nabla^{L^p\otimes E},
\end{equation} 
where $\big(\nabla^{L^p\otimes E}\big)^{\!*}: {C}^\infty(X,T^*X\otimes L^p\otimes E)\to
{C}^\infty(X,L^p\otimes E)$ is the formal adjoint of  $\nabla^{L^p\otimes E}$. Let $V\in C^\infty(X,\operatorname{End}(E))$ be a self-adjoint endomorphism of $E$. The Bochner-Schr\"odinger operator $H_p$ is a second order differential operator acting on $C^\infty(X,L^p\otimes E)$ by
\[
H_{p}=\frac 1p\Delta^{L^p\otimes E}+V. 
\] 

Such operators have been introduced and studied by Demailly in connection with the holomorphic Morse inequalities for the Dolbeault cohomology associated with high tensor powers of a holomorphic Hermitian line bundle over a compact complex manifold \cite{Demailly85} (see also \cite{bismut87,Demailly91,ma-ma:book} and references therein). We will treat the parameter $\hbar = \frac 1p$ as the semiclassical parameter and  the limit $p\to \infty$ as the semiclassical limit, as well known, for instance, in geometric quantization. When the Hermitian line bundle $(L,h^L)$ is trivial and $(E,h^E)$ is the trivial Hermitian line bundle with the trivial connection $\nabla^E$, the operator $H_p$ is closely related with the semiclassical magnetic Schr\"odinger operator  (see, for instance,  \cite[Remark 2]{higherLL}). 

Since $(X,g)$ is complete, the operator $H_p$ is essentially self-adjoint  in the Hilbert space $L^2(X,L^p\otimes E)$ with initial domain  $C^\infty_c(X,L^p\otimes E)$, see \cite[Theorem 2.4]{ko-ma-ma}. We still denote by $H_p$ its unique self-adjoint extension. In \cite{higherLL}. a rough asymptotic description of its spectrum in the semiclassical limit is given (see also \cite{charles21,FT} for the case of compact manifolds). For any function $\varphi\in \mathcal S(\RR)$, one can define by the spectral theorem a bounded linear operator $\varphi(H_p)$ in $L^2(X,L^p\otimes E)$ with smooth Schwartz kernel.  Let $K_{\varphi(H_{p})}\in {C}^{\infty}(X\times X, \pi_1^*(L^p\otimes E)\otimes \pi_2^*(L^p\otimes E)^*)$ be the Schwartz kernel of the operator $\varphi(H_{p})$ with respect to the Riemannian volume form $dv_X$.  (Here $\pi_1,\pi_2 : X\times X\to X$ are the natural projections.) Using the finite propagation speed property of solutions of hyperbolic equations, one can show that for any $\varepsilon >0$ and $k=0,1,2,\ldots,$
\[
\left|K_{\varphi(H_{p})}(x,x^\prime)\right|_{{C}^k}=\mathcal O(p^{-\infty}), \quad d(x,x^\prime)>\varepsilon.
\]
Here $d(x,x^\prime)$ is the geodesic distance and $|K_{\varphi(H_{p})}(x,x^\prime)|_{{C}^k}$ denotes the pointwise ${C}^k$-seminorm of the section $K_{\varphi(H_{p})}$ at a point $(x, x^\prime)\in X\times X$, which is the sum of the norms induced by $h^L, h^E$ and $g^{TX}$ of the derivatives up to order $k$ of $K_{\varphi(H_{p})}$ with respect to the connection $\nabla^{L^p\otimes E}$ and the Levi-Civita connection $\nabla^{TX}$ evaluated at $(x, x^\prime)$.

In this paper, we prove a complete asymptotic expansion for  $K_{\varphi(H_{p})}$ as $p\to \infty$ in a fixed neighborhood of the diagonal (independent of $p$). Such kind of expansion is called the full off-diagonal expansion following Ma-Marinescu's book \cite[Chapter 4]{ma-ma:book}.

First, we introduce normal coordinates near an arbitrary point $x_0\in X$. 
We denote by $B^{X}(x_0,r)$ and $B^{T_{x_0}X}(0,r)$ the open balls in $X$ and $T_{x_0}X$ with center $x_0$ and radius $r$, respectively. We identify $B^{T_{x_0}X}(0,r_X)$ with $B^{X}(x_0,r_X)$ via the exponential map $\exp^X_{x_0}: T_{x_0}X \to X$. Furthermore, we choose trivializations of the bundles $L$ and $E$ over $B^{X}(x_0,r_X)$,   identifying their fibers $L_Z$ and $E_Z$ at $Z\in B^{T_{x_0}X}(0,r_X)\cong B^{X}(x_0,r_X)$ with the spaces  $L_{x_0}$ and $E_{x_0}$ by parallel transport with respect to the connections $\nabla^L$ and $\nabla^E$ along the curve $\gamma_Z : [0,1]\ni u \to \exp^X_{x_0}(uZ)$.

Consider the fiberwise product $TX\times_X TX=\{(Z,Z^\prime)\in T_{x_0}X\times T_{x_0}X : x_0\in X\}$. Let $\pi : TX\times_X TX\to X$ be the natural projection given by  $\pi(Z,Z^\prime)=x_0$. The kernel $K_{\varphi(H_{p})}$ induces a smooth section $K_{\varphi(H_{p}),x_0}(Z,Z^\prime)$ of the vector bundle $\pi^*(\operatorname{End}(E))$ on $TX\times_X TX$ defined for all $x_0\in X$ and $Z,Z^\prime\in T_{x_0}X$ with $|Z|, |Z^\prime|<r_X$:
\[
K_{\varphi(H_p),x_0}(Z,Z^\prime)=K_{\varphi(H_p)}(\exp^X_{x_0}(Z),\exp^X_{x_0}(Z^\prime)).
\]

Let $dv_{X,x_0}$ denote the Riemannian volume form of the Euclidean space $(T_{x_0}X, g_{x_0})$. We define a smooth function $\kappa_{x_0}$ on $B^{T_{x_0}X}(0,r_X)\cong B^{X}(x_0,r_X)$ by the equation
\begin{equation}\label{e:def-kappa}
dv_{X}(Z)=\kappa_{x_0}(Z)dv_{X,x_0}(Z), \quad Z\in B^{T_{x_0}X}(0,r_X). 
\end{equation}

The main result of the paper is the following theorem. 

\begin{theorem}\label{t:main}
For any $j,m,m^\prime\in \mathbb N$, there exists $M\in \NN$ such that, for any $N\in \NN$, there exists $C>0$ such that for any $p\geq 1$, $x_0\in X$ and $Z,Z^\prime\in T_{x_0}X$, $|Z|, |Z^\prime|<r_X$, 
\begin{multline}\label{e:main-exp}
\sup_{|\alpha|+|\alpha^\prime|\leq m}\Bigg|\frac{\partial^{|\alpha|+|\alpha^\prime|}}{\partial Z^\alpha\partial Z^{\prime\alpha^\prime}}\Bigg(p^{-\frac d2}K_{\varphi(H_{p}),x_0}(Z,Z^\prime)\\
-\sum_{r=0}^jF_{r,x_0}(\sqrt{p} Z, \sqrt{p}Z^\prime)\kappa_{x_0}^{-\frac 12}(Z)\kappa_{x_0}^{-\frac 12}(Z^\prime)p^{-\frac{r}{2}}\Bigg) \Bigg|_{C^{m^\prime}_b(X)}\\
\leq Cp^{-\frac{j-m+1}{2}}(1+\sqrt{p}|Z|+\sqrt{p}|Z^\prime|)^{M}(1+\sqrt{p}|Z-Z^\prime|)^{-N}+\mathcal O(p^{-\infty}), 
\end{multline}
where $F_{r,x_0}(Z,Z^\prime),$ $r=0,1,\ldots$, is a smooth section of the vector bundle $\pi^*(\operatorname{End}(E))$ on $TX\times_X TX$.
\end{theorem}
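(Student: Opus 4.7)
The overall strategy follows the standard template for Bergman/Bochner kernel asymptotics (compare Ma--Marinescu \cite{ma-ma:book}): localize via finite propagation speed, apply the parabolic rescaling $Z = u/\sqrt p$ in normal coordinates to convert $H_p$ into a perturbation of an explicit model operator on $T_{x_0}X$, and then extract the expansion from a Helffer--Sj\"ostrand representation of $\varphi(H_p)$. The localization piece is essentially the off-diagonal estimate quoted in the introduction: writing $\varphi$ as an inverse Fourier transform, truncating the Fourier variable to $|t|\le\varepsilon\sqrt p$, and invoking finite propagation speed reduces the problem (up to an $\cO(p^{-\infty})$ error in every $C^k_b$-seminorm, uniformly in $x_0$) to the analysis of $K_{\varphi(H_p),x_0}(Z,Z^\prime)$ for $|Z|,|Z^\prime|\le\varepsilon$ in the fixed normal-coordinate chart and trivialization of $L^p\otimes E$ around $x_0$.

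Next I would conjugate $H_p$ by the unitary dilation $(S_pf)(u) = p^{-d/4}\kappa_{x_0}^{1/2}(u/\sqrt p)f(u/\sqrt p)$ on $T_{x_0}X$. This produces an operator $\cL_p$ with a Taylor-type expansion
\[
\cL_p \;=\; \cL_0 \;+\; \sum_{r\ge 1}p^{-r/2}\cA_{r,x_0},
\]
in which the $\cA_{r,x_0}$ are differential operators with polynomial coefficients in $u$, smooth in $x_0$ and uniformly bounded (together with all derivatives) under the bounded-geometry hypothesis. The leading term $\cL_0$ is a model magnetic Laplacian on $(T_{x_0}X,g_{x_0})$ built from the curvature $R^L_{x_0}$ and the endomorphism $V(x_0)$: a harmonic-oscillator-type operator whose resolvent kernel enjoys Gaussian off-diagonal decay. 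The multiplier $\kappa_{x_0}^{1/2}$ in $S_p$ converts $dv_X$ into Euclidean measure on $T_{x_0}X$ and is the source of the $\kappa_{x_0}^{-1/2}(Z)\kappa_{x_0}^{-1/2}(Z^\prime)$ prefactor in \eqref{e:main-exp}.

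Picking an almost analytic extension $\tilde\varphi$ of $\varphi$, the Helffer--Sj\"ostrand formula gives
\[
\varphi(\cL_p) \;=\; -\frac{1}{\pi}\int_{\CC}\bar\partial\tilde\varphi(z)(z-\cL_p)^{-1}\,dz\wedge d\bar z.
\]
Substituting the finite Neumann expansion $(z-\cL_p)^{-1} = \sum_{r=0}^{j}p^{-r/2}\cR_{r,x_0}(z) + p^{-(j+1)/2}\cR^{\mathrm{rem}}_{j,x_0,p}(z)$, with each $\cR_{r,x_0}(z)$ a finite sum of products $(z-\cL_0)^{-1}\cA_{i_1,x_0}(z-\cL_0)^{-1}\cdots(z-\cL_0)^{-1}$, and integrating against $\bar\partial\tilde\varphi$ yields operators whose Schwartz kernels with respect to Lebesgue measure on $T_{x_0}X$ are, by definition, the coefficients $F_{r,x_0}(u,u^\prime)$ in \eqref{e:main-exp}; in particular $F_{0,x_0}$ is the Schwartz kernel of $\varphi(\cL_0)$.

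The remainder must then be estimated uniformly, and this is the main technical step. The polynomial growth of the $\cA_{r,x_0}$ in $u$ is responsible for the factor $(1+\sqrt p|Z|+\sqrt p|Z^\prime|)^M$, while the Gaussian off-diagonal decay of $(z-\cL_0)^{-1}$ survives integration against $\bar\partial\tilde\varphi$ and produces the factor $(1+\sqrt p|Z-Z^\prime|)^{-N}$. Converting operator-norm estimates into pointwise $C^{m^\prime}_b$-kernel bounds costs a fixed number of derivatives through Sobolev embedding on both the fibre $T_{x_0}X$ and the base $X$; each derivative taken in the original variables $Z,Z^\prime$ produces a factor $\sqrt p$, explaining the $p^{-(j-m+1)/2}$ loss. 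The hard part is to make all of these estimates simultaneously uniform in $x_0\in X$ and sharp enough to track the interplay between polynomial growth in $u$ and Gaussian decay in $u-u^\prime$, so that the anisotropic bound in \eqref{e:main-exp} emerges with precisely the exponents stated; bounded geometry is used throughout to ensure that every constant depends only on finitely many uniformly controlled derivatives of the metric, of the connections $\nabla^L,\nabla^E$, and of $V$.
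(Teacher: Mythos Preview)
Your overall architecture—localization via finite propagation speed, conjugation by the $\sqrt p$-rescaling with the $\kappa^{1/2}$ factor, expansion of the rescaled operator, and the Helffer--Sj\"ostrand representation—is exactly the paper's framework, and your identification of the source of the $\kappa_{x_0}^{-1/2}(Z)\kappa_{x_0}^{-1/2}(Z')$ prefactor and of the $p^{-(j-m+1)/2}$ loss from $Z$-derivatives is correct.

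The gap is in the remainder analysis, specifically your appeal to ``Gaussian off-diagonal decay of $(z-\cL_0)^{-1}$'' to produce the factor $(1+\sqrt p|Z-Z'|)^{-N}$. This step does not go through as stated. First, the paper explicitly allows $R^L_{x_0}$ to be degenerate (even of non-constant rank); then $\cL_0$ has free-Laplacian directions and its resolvent kernel has no Gaussian decay whatsoever. Second, even in the non-degenerate case the Helffer--Sj\"ostrand integral runs over all of $\CC$, and $(z-\cL_0)^{-1}$ blows up like $|\Image z|^{-1}$ as $z$ approaches the real axis; any off-diagonal decay you extract must come with an explicit accounting of the $|\Image z|$-dependence, to be absorbed by the infinite-order vanishing of $\bar\partial\tilde\varphi$ on $\RR$. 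Third, your Neumann remainder contains a factor $(z-\cL_p)^{-1}$ applied \emph{after} operators with polynomially growing coefficients, so one must control it in spaces that track growth in $u$ and decay in $u-u'$ simultaneously.

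The paper avoids kernel-level decay estimates entirely. Rather than a Neumann expansion, it sets $t=p^{-1/2}$, differentiates $\varphi(\mathcal H_t)$ in $t$, bounds $\partial_t^r\varphi(\mathcal H_t)$ uniformly in $t\in(0,1]$, and invokes Taylor's formula with integral remainder; the $F_r$ are \emph{defined} as $\lim_{t\to 0}\partial_t^r K_{\varphi(\mathcal H_t)}$ and only afterwards identified with your Neumann terms. The uniform bounds are proved in a scale of weighted Sobolev norms
\[
\|s\|_{t,m,M,N,W}=\sum_{|\alpha|\le M,\ |\beta|\le N}\bigl\|Z^\alpha(Z-W)^\beta s\bigr\|_{t,m},
\]
with the key resolvent estimate $\|(\lambda-\mathcal H_t)^{-1}s\|_{t,m+2,M,N,W}\le C|\Image\lambda|^{-M-N-1}\|s\|_{t,m,M,N,W}$ obtained by commuting the polynomial weights through $\mathcal H_t$. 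Taking $W=Z'$ encodes off-diagonal decay as a polynomial weight, works unchanged when $R^L$ is degenerate, and tracks exactly the power of $|\Image\lambda|$ that $\bar\partial\tilde\varphi$ can absorb. The passage to pointwise $C^{m'}_b$ kernel bounds is then a clean Sobolev-embedding argument. This commutator/weighted-norm mechanism is what you need in place of the Gaussian-decay heuristic.
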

Here $C^{m^\prime}_b(X)$ is the $C^{m^\prime}_b$-norm for the parameter $x_0\in X$. 

Putting $Z=Z^\prime=0$, we get on-diagonal expanion for $K_{\varphi(H_{p})}$. 

\begin{cor}\label{c:main} 
For any $j\in \mathbb N$, there exists $C>0$ such that for any $p\geq 1$ and $x_0\in X$,  
\[
\Bigg|p^{-\frac d2}K_{\varphi(H_{p})} (x_0,x_0)-\sum_{r=0}^jf_{r}(x_0)p^{-\frac{r}{2}}\Bigg|\leq Cp^{-\frac{j+1}{2}},
\]
where $f_{r},$ $r=0,1,\ldots$, is a smooth section of the vector bundle $\operatorname{End}(E)$ on $X$ given by
\[
f_{r}(x_0)=F_{r,x_0}(0,0).
\]
\end{cor}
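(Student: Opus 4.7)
The plan is to obtain Corollary \ref{c:main} as an immediate specialization of Theorem \ref{t:main}; no genuinely new work will be required once the theorem is in hand. Concretely, I will apply the off-diagonal expansion \eqref{e:main-exp} with $m = m' = 0$ and with the same $j$, and then evaluate at $Z = Z' = 0 \in T_{x_0}X$.

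With this choice the sup over $|\alpha| + |\alpha'| \leq 0$ collapses to the pointwise value, and the left-hand side of \eqref{e:main-exp} reduces to the absolute value of
\[
p^{-d/2}\, K_{\varphi(H_p), x_0}(0, 0) - \sum_{r=0}^j F_{r, x_0}(0, 0)\, \kappa_{x_0}(0)^{-1}\, p^{-r/2}.
\]
I will then use the normal-coordinate identification $\exp^X_{x_0}(0) = x_0$ to see that $K_{\varphi(H_p), x_0}(0, 0) = K_{\varphi(H_p)}(x_0, x_0)$, together with the elementary fact that the Riemannian volume density in geodesic normal coordinates satisfies $\kappa_{x_0}(0) = 1$, so that the $\kappa_{x_0}$ factor drops out. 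Defining $f_r(x_0) := F_{r, x_0}(0, 0)$ then yields exactly the expansion stated in the corollary, and smoothness of $f_r$ on $X$ follows from smoothness of $F_{r, x_0}$ as a section of $\pi^*(\operatorname{End}(E))$ on $TX \times_X TX$.

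On the right-hand side of \eqref{e:main-exp}, the polynomial factor $(1 + \sqrt p |Z| + \sqrt p |Z'|)^M$ and the decay factor $(1 + \sqrt p |Z - Z'|)^{-N}$ both equal $1$ at $Z = Z' = 0$, regardless of $M$ and $N$. With $m = 0$ the exponent becomes $-(j+1)/2$, so the bound reduces to $C p^{-(j+1)/2} + \mathcal O(p^{-\infty})$; since $p^{-\infty}$ is dominated by $p^{-(j+1)/2}$ uniformly for $p \geq 1$, it is absorbed into the constant $C$.

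In short, there is no substantive obstacle in proving the corollary itself: the only items to verify are the bookkeeping identities $\kappa_{x_0}(0) = 1$ and $K_{\varphi(H_p), x_0}(0, 0) = K_{\varphi(H_p)}(x_0, x_0)$, together with the domination of $\mathcal O(p^{-\infty})$ by $p^{-(j+1)/2}$. The real work lies in establishing Theorem \ref{t:main}; Corollary \ref{c:main} is the on-diagonal readout obtained essentially for free.
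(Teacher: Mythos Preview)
Your proposal is correct and matches the paper's own approach: the paper simply states ``Putting $Z=Z^\prime=0$, we get on-diagonal expansion for $K_{\varphi(H_{p})}$'' and offers no further argument. The bookkeeping you spell out (that $\kappa_{x_0}(0)=1$, the identification $K_{\varphi(H_p),x_0}(0,0)=K_{\varphi(H_p)}(x_0,x_0)$, and the absorption of the $\mathcal O(p^{-\infty})$ term) is exactly what is implicit in that one-line deduction.
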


As an immediate consequence of Corollary \ref{c:main}, we get the following semiclassical trace formula for the operator $H_p$ in the case when $X$ is compact. 

\begin{cor}\label{t:trace} There exists a sequence of distributions $f_r \in \mathcal D^\prime(\mathbb R), r=0,1,\ldots$, such that for any $\varphi\in C^\infty_c(\mathbb R)$, we have an asymptotic expansion
\begin{equation}\label{e:trace}
\operatorname{tr} \varphi(H_{p})\sim p^{\frac d2}\sum_{r=0}^{j}\langle f_{r}, \varphi\rangle p^{-\frac{r}{2}}, \quad p \to \infty,
\end{equation}
which means that for any $j\in \mathbb N$ there exists $C>0$ such that 
\[
\left|p^{-\frac d2} \operatorname{tr} \varphi(H_{p})-\sum_{r=0}^{j}\langle f_{r}, \varphi\rangle p^{-\frac{r}{2}}\right|\leq Cp^{-\frac{j+1}{2}}, \quad p \in \mathbb N.
\]
\end{cor}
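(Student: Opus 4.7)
Since $X$ is compact and $K_{\varphi(H_p)}$ is smooth, the operator $\varphi(H_p)$ is of trace class on $L^2(X,L^p\otimes E)$, and its trace is computed by integration of the fiberwise endomorphism trace of the Schwartz kernel on the diagonal:
\[
\operatorname{tr}\varphi(H_p)=\int_X \operatorname{tr}_{E_{x_0}} K_{\varphi(H_p)}(x_0,x_0)\,dv_X(x_0).
\]
The plan is to insert the on-diagonal asymptotic expansion from Corollary \ref{c:main} into this integral and integrate term by term. Because $X$ is compact, the pointwise estimate provided by Corollary \ref{c:main} is automatically uniform in $x_0\in X$ (the underlying bounds from Theorem \ref{t:main} are, in addition, uniform on manifolds of bounded geometry), so the remainder can be integrated against the finite volume form $dv_X$ without losing a power of $p$.

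The next step is to extract the dependence of the coefficients on $\varphi$. For each $r$, the section $f_r\in C^\infty(X,\operatorname{End}(E))$ of Corollary \ref{c:main} is built from the full off-diagonal coefficients $F_{r,x_0}(0,0)$, and an inspection of the construction producing $F_{r,x_0}$ (via functional calculus applied to the model operators in rescaled normal coordinates) shows that $\varphi\mapsto f_r(x_0)$ is given by pairing $\varphi$ against a smooth family of tempered distributions depending on $x_0$. In particular, $\varphi\mapsto f_r(x_0)$ is linear and continuous for the $C^\infty_c(\RR)$-topology, uniformly in $x_0\in X$. Therefore the assignment
\[
\langle f_r,\varphi\rangle := \int_X \operatorname{tr}_{E_{x_0}} f_r(x_0)\,dv_X(x_0)
\]
defines a distribution $f_r\in\mathcal D^\prime(\RR)$.

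Combining these two ingredients, for every $j\in\NN$ we obtain
\[
\Bigl|p^{-d/2}\operatorname{tr}\varphi(H_p)-\sum_{r=0}^{j}\langle f_r,\varphi\rangle p^{-r/2}\Bigr|
\leq \int_X C_{x_0}\,dv_X(x_0)\cdot p^{-(j+1)/2},
\]
with $C_{x_0}$ the constant from Corollary \ref{c:main} at $x_0$; since this constant is locally bounded and $X$ is compact, the integral is finite, yielding the claimed bound. The only non-routine point, and the one I would spend the most care on, is the verification that the $\varphi$-dependence of the coefficients $f_r$ is indeed distributional; this reduces to tracking $\varphi$ through the construction of the $F_{r,x_0}$ in the proof of Theorem \ref{t:main} and checking continuity in the Schwartz (hence $C^\infty_c$) topology, which follows from the fact that the model operators involved have $\varphi$ enter only through the bounded functional calculus applied to self-adjoint model Hamiltonians with discrete spectrum.
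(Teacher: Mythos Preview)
Your argument is correct and is exactly the route the paper has in mind: the text declares Corollary~\ref{t:trace} to be ``an immediate consequence of Corollary~\ref{c:main}'' and provides no separate proof, so integrating the on-diagonal expansion over the compact manifold and reading off the distributional dependence on $\varphi$ is the intended (and only natural) proof. Two minor remarks: first, Corollary~\ref{c:main} already gives a single constant $C$ independent of $x_0$, so there is no need to introduce $C_{x_0}$ and appeal to local boundedness; second, the model operator $\mathcal H^{(x_0)}$ does \emph{not} in general have discrete spectrum (when $d>2n$ there is a continuous part, cf.\ \eqref{e:f0-d} and \eqref{e:Phi-kernel}), but this does not affect your continuity claim, which follows directly from the explicit formulas \eqref{e:fr1-form}--\eqref{e:fr2-form} expressing $f_r(x_0)$ as absolutely convergent sums/integrals of values of $\varphi^{(\ell-1)}$.
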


Explicit formulas for the leading coefficients $F_{0,x_0}$ and $f_{0}$ involve particular second order differential operators  (the model operators) associated with an arbitrary point $x_0\in X$, introduced already  by Demailly \cite{Demailly85,Demailly91}. They are obtained from $H_p$ by freezing its coefficients at $x_0$.

Let $x_0\in X$. Consider the trivial Hermitian vector bundle $E_0$ over $T_{x_0}X$ with the fiber $E_{x_0}$. We introduce the connection 
\begin{equation}\label{e:nablaL0}
\nabla^{(x_0)}_{v}=\nabla_{v}+\frac{1}{2}R^L_{x_0}(w,v), \quad v\in T_w(T_{x_0}X), \quad w\in T_{x_0}X,
\end{equation}
acting on $C^\infty(T_{x_0}X, E_0)\cong C^\infty(T_{x_0}X, E_{x_0})$, where $\nabla_{v}$ denotes the usual derivative of a vector-valued function along $v$ and  $R^L=(\nabla^L)^2$ is the curvature of the connection $\nabla^L $.  Its curvature is constant and equals $R^L_{x_0}$. Denote by $\Delta^{(x_0)}$ the associated Bochner Laplacian.

The model operator $\mathcal H^{(x_0)}$ on $C^\infty(T_{x_0}X, E_{x_0})$ is defined as 
\begin{equation}\label{e:DeltaL0p}
\mathcal H^{(x_0)}=\Delta^{(x_0)}+V(x_0).
\end{equation}

\begin{thm}\label{t:leading-coefficient}
The leading coefficient $F_{0,x_0}$ in the asymptotic expansion \eqref{e:main-exp} is the Schwartz kernel $K_{\varphi(\mathcal H^{(x_0)})}$ of the corresponding function $\varphi(\mathcal H^{(x_0)})$ of the model operator $\mathcal H^{(x_0)}$:
\begin{equation}\label{e:F0}
F_{0,x_0}(Z,Z^\prime)=K_{\varphi(\mathcal H^{(x_0)})}(Z,Z^\prime).
\end{equation}
as a consequence, we get 
\begin{equation}\label{e:f0}
f_{0}(x_0)=K_{\varphi(\mathcal H^{(x_0)})}(0,0).
\end{equation}
\end{thm}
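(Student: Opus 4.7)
My plan is to identify $F_{0,x_0}$ by rescaling normal coordinates at $x_0$ by a factor of $\sqrt p$ and showing that the rescaled operator converges to the model operator $\mathcal{H}^{(x_0)}$ in a sense strong enough to control the Schwartz kernel of $\varphi(H_p)$ at leading order. I fix $x_0\in X$ and use the exponential map together with parallel transport in $L$ and $E$ along radial geodesics to realize $H_p$ as a second-order differential operator $H_p^{(x_0)}$ on $B^{T_{x_0}X}(0,r_X)$ acting on $E_{x_0}$-valued functions. The key algebraic input is the Fedosov-type Taylor expansion of the connection one-form of $L$ in the parallel-transport gauge,
\[
A^{L}_{i}(Z) = \tfrac12 R^L_{x_0}(Z,e_i) + \mathcal O(|Z|^2),
\]
together with the expansions $g_{ij}(Z)=\delta_{ij}+\mathcal O(|Z|^2)$, $A^E_i(Z) = \mathcal O(|Z|)$, and $V(Z)=V(x_0)+\mathcal O(|Z|)$.

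Next I introduce the rescaling $\tilde Z=\sqrt p\, Z$ and let $\mathcal{L}_p$ denote the operator obtained by pulling back $\kappa_{x_0}^{1/2}\, H_p^{(x_0)}\,\kappa_{x_0}^{-1/2}$ to the $\tilde Z$-variables; it is a differential operator on the expanding ball $B^{T_{x_0}X}(0,\sqrt p\, r_X)$, symmetric with respect to the flat measure $dv_{X,x_0}$, and the conjugation by $\kappa_{x_0}^{1/2}$ accounts for precisely the $\kappa_{x_0}^{-1/2}$ prefactors in \eqref{e:main-exp}. A direct computation, combining the rescaling rule $\partial_{Z_i}\mapsto \sqrt p\,\partial_{\tilde Z_i}$ with the $p^{-1}$ in front of the Bochner Laplacian in $H_p$, shows that the effective $L$-connection in $\mathcal{L}_p$ equals $\partial_{\tilde Z_i}+\sqrt p\,A^L_i(\tilde Z/\sqrt p) = \nabla^{(x_0)}_{e_i}+\mathcal O(p^{-1/2})$, while $A^E$ and $V$ contribute at order $p^{-1/2}$. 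Hence one obtains a formal operator expansion
\[
\mathcal{L}_p = \mathcal{H}^{(x_0)} + \sum_{r\ge 1} p^{-r/2}\,\mathcal{R}_r,
\]
where each $\mathcal{R}_r$ is a differential operator of order at most two with polynomial coefficients in $\tilde Z$, and the leading term is exactly $\mathcal{H}^{(x_0)}=\Delta^{(x_0)}+V(x_0)$.

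The same rescaling relates Schwartz kernels: with respect to $dv_{X,x_0}$,
\[
K_{\varphi(\mathcal{L}_p)}(\tilde Z,\tilde Z') = p^{-d/2}\,\kappa_{x_0}^{1/2}(\tilde Z/\sqrt p)\,K_{\varphi(H_p),x_0}(\tilde Z/\sqrt p,\tilde Z'/\sqrt p)\,\kappa_{x_0}^{1/2}(\tilde Z'/\sqrt p),
\]
so \eqref{e:main-exp} at the leading order reads $K_{\varphi(\mathcal{L}_p)}(\tilde Z,\tilde Z') = F_{0,x_0}(\tilde Z,\tilde Z') + \mathcal O(p^{-1/2})$. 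To identify the limit I will invoke the uniform-in-$p$ weighted resolvent estimates already established for the proof of Theorem~\ref{t:main} and apply the Helffer--Sj\"ostrand formula for $\varphi(\mathcal{L}_p)-\varphi(\mathcal{H}^{(x_0)})$ via the resolvent identity; combined with the operator expansion above, this yields $K_{\varphi(\mathcal{L}_p)}\to K_{\varphi(\mathcal{H}^{(x_0)})}$ pointwise, proving \eqref{e:F0}, and setting $\tilde Z=\tilde Z'=0$ immediately gives \eqref{e:f0}. The main obstacle is precisely this passage from formal operator convergence to kernel convergence: since $\mathcal{L}_p$ is defined only on the ball of radius $\sqrt p\, r_X$ and the perturbation $\mathcal{L}_p-\mathcal{H}^{(x_0)}$ has coefficients growing polynomially in $\tilde Z$, one needs $p$-uniform weighted resolvent bounds together with a cutoff argument justified by the finite propagation speed property invoked in the introduction. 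Once these estimates are in place, the identification of $F_{0,x_0}$ as $K_{\varphi(\mathcal{H}^{(x_0)})}$ is essentially tautological.
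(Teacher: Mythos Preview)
Your proposal is correct and follows essentially the same approach as the paper: rescale by $t=p^{-1/2}$, expand the rescaled operator as $\mathcal H_t=\mathcal H^{(x_0)}+O(t)$, and use the Helffer--Sj\"ostrand formula together with the uniform weighted resolvent estimates from the proof of Theorem~\ref{t:main} to pass to the limit $t\to 0$, obtaining $F_0=\varphi(\mathcal H^{(x_0)})$. The only refinement in the paper is that the domain issue you flag (``$\mathcal L_p$ defined only on the ball of radius $\sqrt p\,r_X$'') is handled earlier by first extending the localized operator $H_p^{(x_0)}$ from $B^{T_{x_0}X}(0,\varepsilon)$ to all of $T_{x_0}X$ as a bounded-geometry operator, so that the rescaled family $\mathcal H_t$ is globally defined and depends smoothly on $t$ down to $t=0$; this replaces your cutoff argument and makes the limit identification a one-line consequence of the Helffer--Sj\"ostrand formula at $r=0$.
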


One can compute the Schwartz kernel $K_{\varphi(\mathcal H^{(x_0)})}$ and get a more explicit formula for $f_{0}(x_0)$.  Consider the real-valued closed 2-form $\mathbf B$ given by 
\begin{equation}\label{e:def-omega}
\mathbf B=iR^L. 
\end{equation} 
For $x_0\in X$, let $B_{x_0} : T_{x_0}X\to T_{x_0}X$ be a skew-adjoint operator such that 
\[
\mathbf B_{x_0}(u,v)=g(B_{x_0}u,v), \quad u,v\in T_{x_0}X. 
\]
Suppose that the rank of $B_{x_0}$ equals $2n=2n_{x_0}$. Its non-zero eigenvalues have the form $\pm i a_j({x_0}), j=1,\ldots,n,$ with $a_j({x_0})>0$ and, if $d>2n$, zero is an eigenvalue of multiplicity $d-2n$. Denote by $V_\mu({x_0}), \mu=1,\ldots,\operatorname{rank}(E)$, the eigenvalues of $V({x_0})$. 

For $\mathbf k \in \mathbb Z^n_+$ and $\mu=1,\ldots,\operatorname{rank}(E)$, set
\begin{equation}\label{e:def-Lambda}
\Lambda_{\mathbf k,\mu}(x_0)=\sum_{j=1}^n(2k_j+1) a_j(x_0)+V_\mu(x_0).
\end{equation}
%In the full-rank case $d=2n$, the spectrum of $\mathcal H^{(x_0)}$ is a countable set of eigenvalues of infinite multiplicity:
%\[
%\sigma(\mathcal H^{(x_0)})=\left\{\Lambda_{\mathbf k,\mu}({x_0})\,:\, \mathbf k=(k_1,\cdots,k_n)\in\ZZ_+^n, \mu=1,\ldots,r\right\}. 
%\]
%If $d>2n$, the spectrum of $\mathcal H^{(x_0)}$ is a semiaxis:
%\[
%\sigma(\mathcal H^{(x_0)})=[\Lambda_0(x_0), +\infty),
%\]
%where 
%\[
%\Lambda_0(x_0):=\sum_{j=1}^n a_j(x_0)+\min _\mu V_\mu(x_0). 
%\]
Then, in the full-rank case $d=2n$, we have
\begin{equation}\label{e:f0-2n}
f_{0}(x_0)=\frac{1}{(2\pi)^{n}} \left(\prod_{j=1}^n a_j(x_0)\right)  \sum_{\mathbf k, \mu}\varphi(\Lambda_{\mathbf k,\mu}(x_0)), 
\end{equation}
and, for $d>2n$,
\begin{equation}\label{e:f0-d}
f_{0}(x_0)=\frac{1}{(2\pi)^{d-n}} \left(\prod_{j=1}^n a_j(x_0)\right)  \sum_{\mathbf k, \mu}\int_{\RR^{d-2n}} \varphi(|\xi|^2+\Lambda_{\mathbf k,\mu}(x_0))d\xi.
\end{equation}
We refer the reader to Section \ref{s:computation} for a description of the structure of the lower order coefficients (see \eqref{e:fr1-form} and \eqref{e:fr2-form}).

As an immediate consequence of Corollary \ref{c:main} and the description of the coefficients of the asymptotic expanion given in Section \ref{s:computation}, we obtain the following result on the asymptotic  localization of the Schwartz kernel of the spectral projection on the diagonal in the case when the curvature is of full rank, which is an improvement of \cite[Theorem 1.3]{charles21}.

\begin{thm}\label{t:local}
Assume that, for some $x_0\in X$, the rank of $B_{x_0}$ equals $d$ and an interval $(\alpha,\beta)$ does not contain any $\Lambda_{\mathbf k,\mu}(x_0)$ with  $\mathbf k \in \mathbb Z^n_+$ and $\mu=1,\ldots,\operatorname{rank}(E)$. For any $\varphi\in \mathcal S(\RR)$ such that ${\rm supp}\,\varphi \subset (\alpha,\beta)$, 
\begin{equation} \label{e:loc1}
\left|K_{\varphi(H_{p})}(x_0,x_0)\right|_{{C}^k}=\mathcal O(p^{-\infty}), \quad k=0,1,\ldots, \quad p\to \infty.
\end{equation}
Moreover, if an interval $[\alpha,\beta]$ does not contain any $\Lambda_{\mathbf k,\mu}(x_0)$ with  $\mathbf k \in \mathbb Z^n_+$ and $\mu=1,\ldots,\operatorname{rank}(E)$, then the Schwartz kernel of the spectral projection $E_{[\alpha,\beta]}$ of the operator $H_p$ associated with $[\alpha,\beta]$ satisfies 
\begin{equation} \label{e:loc2}
\left|E_{[\alpha,\beta]}(x_0,x_0)\right|=\mathcal O(p^{-\infty}),\quad p\to \infty.
\end{equation}
\end{thm}

If the manifold $X$ is compact, then the operator $H_p$ has discrete spectrum, and there exists an orthonormal basis $\{u_{p,j}\in C^\infty(X,L^p\otimes E), j\in \mathbb N\}$ with the corresponding eigenvalues $\lambda_{p,j}$:
\[
H_pu_{p,j} = \lambda_{p,j}u_{p,j}. 
\]
Then 
\[
E_{[\alpha,\beta]}(x_0,x_0)=\sum_{j:\lambda_{p,j}\in [\alpha,\beta]} |u_{p,j}(x_0)|^2. 
\]
By Theorem~\ref{t:local}, we conclude that, if, for some $x_0\in X$, an interval $[\alpha,\beta]$ does not contains any $\Lambda_{\mathbf k,\mu}(x_0)$ with  $\mathbf k \in \mathbb Z^n_+$ and $\mu=1,\ldots,\operatorname{rank}(E)$, then, for any sequence $\{u_{p}\in C^\infty(X,L^p\otimes E), p\in \mathbb N\}$ of eigenfunctions of $H_p$ with the corresponding eigenvalues $\lambda_{p}$ in $[\alpha,\beta]$ for any $p\in \mathbb N$, we have
\[
 |u_{p}(x_0)|=\mathcal O(p^{-\infty}), \quad p\to \infty.
\]
In other words, the essential support of the sequence $\{u_{p}, p\in \mathbb N\}$ is contained in the set of all $x_0\in X$ such that $\Lambda_{\mathbf k,\mu}(x_0)\in [\alpha,\beta]$ for some $\mathbf k \in \mathbb Z^n_+$ and $\mu=1,\ldots,\operatorname{rank}(E)$. This result complements the asymptotic description of the spectrum of $H_p$ in terms of $\Lambda_{\mathbf k,\mu}(x_0)$ given in \cite{charles21,higherLL}.

Theorem~\ref{t:main} is the analog of Theorem 4.18' in \cite{dai-liu-ma} and Theorem 1 in \cite{Kor18} on the full off-diagonal expasions for the (generalized) Bergman kernels. Its proof is based, first of all, on the approach to the asymptotic analysis of generalized Bergman kernels developed in \cite{dai-liu-ma,ma-ma:book,ma-ma08} and inspired by Bismut-Lebeau localization technique \cite{BL}. In addition, we use the functional calculus based on the Helffer-Sjostrand formula \cite{HS-LNP345}, that allows us to study more general functions of the operator. Such a strategy has been applied in a close setting in \cite{Savale17} and \cite{Marinescu-Savale}. Note that it does not require the spectral gap property for the operator $H_p$ as in \cite{dai-liu-ma,ma-ma:book,ma-ma08} and, therefore, allows us to drop the assumption on the curvature $R^L$ to be non-degenerate (and even constant-rank). Finally, we use the methods of \cite{Kor18} based on weighted estimates to get the full off-diagonal expansions (see also \cite{ko-ma-ma,higherLL} for the extension to manifolds of bounded geometry). In the case of Bergman kernels, the remainder estimates in asymptotic expansions are exponentially decreasing for large $|Z-Z^\prime|$. It is clear that, for general functions from the Schwartz class,  these remainder estimates should be only rapidly decreasing. 

As mentioned above, the operator $H_p$ was first studied by Demailly in \cite{Demailly85,Demailly91}. In particular, Demailly proved the Weyl asymptotic formula for the eigenvalue distribution function of the operator  $H_p$ without any restrictions on the curvature of $L$. The heat kernel proof of the Weyl asymptotic formule was given in \cite{bouche90}. We refer to \cite{bismut87,bouche90,Demailly91,ma-ma:book,ma-ma-zelditch15} and references therein for the study of the heat kernel associated with $H_p$ and to \cite{Morin19,charles21} for the study of the Weyl asymptotic formula in the case when the curvature $R^L$ is nondegenerate. 

In the case when the Hermitian line bundle $(L,h^L)$ is trivial and $(E,h^E)$ is a trivial Hermitian line bundle with a trivial connection $\nabla^E$, the formula \eqref{e:trace} is a particular case of the Gutzwiller trace formula for the semiclassical magnetic Schr\"odinger operator and the zero energy level of the classical Hamiltonian, which is critical in this case. In \cite{K-D97},  a similar formula has been obtained for the Schr\"odinger operator $h^2\Delta+V$ in $\mathbb R^n$ and a non-degenerate critical energy level (see also \cite{BPU95,camus04} for a related study). 

In the case when the curvature $R^L$ is nondegenerate and the Fourier transform of $\varphi$ is compactly supported, similar asymptotic expansions for the operators $\varphi(H_p)$ are proved in \cite{B-Uribe07}, using the theory of Fourier integral operators of Hermite type. 

We refer the interested reader to \cite{UMN-trace} for a survey of basic notions and results related with the trace formula \eqref{e:trace}.

 \section{Localization of the problem}\label{local}
In this section, we localize the problem, using the constructions of \cite[Sections 1.1 and 1.2]{ma-ma08}. 

First, we will use normal coordinates near an arbitrary point $x_0\in X$ and trivializations of vector bundles $L$ and $E$ defined in Introduction. 
Consider the trivial bundles $L_0$ and $E_0$  on $T_{x_0}X$ with fibers $L_{x_0}$ and $E_{x_0}$, respectively. The above identifications induce the Riemannian metric $g$ on $B^{T_{x_0}X}(0,\varepsilon)$
as the connections $\nabla^L$ and $\nabla^E$ and the Hermitian metrics $h^L$ and $h^E$ on the restrictions of $L_0$ and $E_0$ to $B^{T_{x_0}X}(0,\varepsilon)$.

Now we fix $\varepsilon \in (0,r_X)$ and we extend these geometric objects from $B^{T_{x_0}X}(0,\varepsilon)$ to $T_{x_0}X$ so that $(T_{x_0}X, g^{(x_0)})$ is a manifold of bounded geometry, $L_0$ and $E_0$ have bounded geometry and $V^{{(x_0)}}\in C^\infty_b(T_{x_0}X,\operatorname{End}(E_0))$ be a self-adjoint endomorphism of $E_0$.  For instance, we can take a smooth even function $\rho : \mathbb R\to [0,1]$ supported in $(-r_X,r_X)$ such that $\rho(v)=1$ if $|v|<\varepsilon$ and introduce the map $\varphi : T_{x_0}X\to T_{x_0}X$ defined by $\varphi (Z)=\rho(|Z|_{g^{(x_0)}})Z$. Then we introduce the Riemannian metric $g^{(x_0)}_Z=g_{\varphi(Z)}, Z\in T_{x_0}X$, a Hermitian connection $\nabla^{L_0}$ on $(L_0,h^{L_0})$ by 
\[
\nabla^{L_0}_u=\nabla^L_{d\varphi(Z)(u)}. \quad Z\in T_{x_0}X, \quad u\in T_Z(T_{x_0}X), 
\]
where we use the canonical isomorphism $T_{x_0}X\cong T_Z(T_{x_0}X)$, and a Hermitian connection $\nabla^{E_0}$ on $(E_0,h^{E_0})$ by $\nabla^{E_0}=\varphi^*\nabla^E$. Finally, we set $V^{(x_0)}=\varphi^*V$. 

Let $\Delta^{L_0^p\otimes E_0}$ be the associated Bochner Laplacian acting on $C^\infty(T_{x_0}X,L_0^p\otimes E_0)$.  Introduce the operator $H^{(x_0)}_p$ acting on $C^\infty(T_{x_0}X,L_0^p\otimes E_0)$ by
\[
H^{(x_0)}_p=\frac 1p \Delta^{L_0^p\otimes E_0}+ V^{(x_0)}.
\]
It is clear that, for any $u \in {C}^\infty_c(B^{T_{x_0}X}(0,\varepsilon))$, we have
\begin{equation}\label{e:Hp=HX0p}
H_pu(Z)=H^{(x_0)}_pu(Z).
\end{equation}

Let $dv^{(x_0)}$ be the Riemannian volume form of $(T_{x_0}X, g^{(x_0)})$ and
\begin{equation}\label{e:4.6}
	K_{\varphi(H^{(x_0)}_p)}\in {C}^{\infty}(T_{x_0}X\times T_{x_0}X,
\pi_1^*(L_0^p\otimes E_0)\otimes \pi_2^*(L^p_0\otimes E_0)^*)
\end{equation}
the Schwartz kernel of the operator $\varphi(H^{(x_0)}_p)$ with respect to the volume form $dv^{(x_0)}$. Recall that the Schwartz kernel $K_{\varphi(H_p)}$ induces a smooth section $K_{\varphi(H_p),x_0}(Z,Z^\prime)$ of the vector bundle $\pi^*(\operatorname{End}(E))$ on $TX\times_X TX$ defined for all $x_0\in X$ and $Z,Z^\prime\in T_{x_0}X$ with $|Z|, |Z^\prime|<r_X$. We show that the kernels $K_{\varphi(H_p),x_0}(Z,Z^\prime)$ and $K_{\varphi(H^{(x_0)}_p)}(Z,Z^\prime)$ are asymptotically close on $B^{T_{x_0}X}(0,\varepsilon)$ in the $\mathcal C^\infty$-topology, as $p\to \infty$. 

\begin{prop}\label{p:Pq-difference}
For any $\varepsilon_1\in (0,\varepsilon)$, $k\in \mathbb N$ and $N\in \NN$, there exists $C>0$ such that 
 \[
|K_{\varphi(H_p),x_0}(Z,Z^\prime)-K_{\varphi(H^{(x_0)}_p)}(Z,Z^\prime)|_{\mathcal C^k}\leq Cp^{-N}
\]
for any $p\in\NN$, $x_0\in X$  and $Z,Z^\prime\in B^{T_{x_0}X}(0,\varepsilon_1)$.
 \end{prop}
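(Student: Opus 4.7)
The strategy is to exploit the finite propagation speed of the wave equation for $H_p$ together with the identity \eqref{e:Hp=HX0p}, combined with functional calculus. Choose $M > 0$ large enough, independently of $p$ and $x_0$, so that $H_p + M \geq 0$ and $H^{(x_0)}_p + M \geq 0$; such an $M$ exists by bounded geometry, which gives a uniform bound on $V$ and $V^{(x_0)}$. Define the even Schwartz function $\psi(s) := \varphi(s^2 - M)$, so that by spectral calculus $\varphi(H_p) = \psi(\sqrt{H_p + M})$, and analogously for $H^{(x_0)}_p$. Fourier inversion gives
\[
\varphi(H_p) = \frac{1}{\pi}\int_0^{\infty}\hat\psi(t)\cos\bigl(t\sqrt{H_p + M}\bigr)\,dt,
\]
and similarly for $H^{(x_0)}_p$.

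Set $T_p := \sqrt{p}\,(\varepsilon - \varepsilon_1)$. Since the principal symbol of $H_p$ is $\tfrac{1}{p}|\xi|^2$, the cosine operator $\cos(t\sqrt{H_p + M})$ propagates at speed $p^{-1/2}$. Hence for $|t| \leq T_p$, any wave starting in $B^{T_{x_0}X}(0,\varepsilon_1)$ stays inside $B^{T_{x_0}X}(0,\varepsilon)$, where by \eqref{e:Hp=HX0p} the operators $H_p$ and $H^{(x_0)}_p$ coincide. Splitting the integral at $t = T_p$ and letting $\psi_{\leq T}(s) := \tfrac{1}{\pi}\int_0^T \hat\psi(t)\cos(ts)\,dt$, the distributional Schwartz kernel of the short-time difference $\psi_{\leq T_p}(\sqrt{H_p+M}) - \psi_{\leq T_p}(\sqrt{H^{(x_0)}_p+M})$ vanishes on $B^{T_{x_0}X}(0,\varepsilon_1) \times B^{T_{x_0}X}(0,\varepsilon_1)$. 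Since $K_{\varphi(H_p),x_0} - K_{\varphi(H^{(x_0)}_p)}$ is smooth there, it agrees with the kernel of the tail contribution.

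It remains to estimate the tail $\psi_{>T}(s) := \psi(s) - \psi_{\leq T}(s) = \tfrac{1}{\pi}\int_T^\infty \hat\psi(t)\cos(ts)\,dt$. The identity $(1+s^2)^k\cos(ts) = (1-\partial_t^2)^k\cos(ts)$ and integration by parts, combined with Schwartz decay of $\hat\psi$, yield $\sup_{s \in \RR}|(1+s^2)^k\psi_{>T_p}(s)| \leq C_{k,N}\,p^{-N/2}$ for all $k, N \in \NN$. By the spectral theorem, $(1+H_p+M)^k\,\psi_{>T_p}(\sqrt{H_p+M})$ then has $L^2$-operator norm at most $C_{k,N}\,p^{-N/2}$, and likewise for $H^{(x_0)}_p$. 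Writing
\[
\psi_{>T_p}(\sqrt{H_p+M}) = (1+H_p+M)^{-k_1}\bigl[(1+H_p+M)^{k}\psi_{>T_p}(\sqrt{H_p+M})\bigr](1+H_p+M)^{-k_2}
\]
with $k_1 + k_2 = k$ and using that the resolvent powers $(1+H_p+M)^{-k_j}$ map $L^2$ into $\mathcal C^{k'}$ with norm uniform in $x_0 \in X$ and $p \geq 1$ for $k_1, k_2$ large (Sobolev embedding under bounded geometry; cf.~\cite{ko-ma-ma,higherLL}), one converts the $L^2$-operator-norm smallness into a uniform pointwise $\mathcal C^k$ bound on the kernel of $\psi_{>T_p}(\sqrt{H_p+M})$, and analogously for $H^{(x_0)}_p$. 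Subtracting yields the required $O(p^{-N})$ estimate.

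The main obstacle is this final step: converting $L^2$-operator-norm smallness into uniform pointwise $\mathcal C^k$ bounds on the Schwartz kernel, with constants independent of $x_0 \in X$. Everything preceding it is clean functional calculus and finite propagation speed; only the bounded geometry hypotheses on $(X,g)$, $L$, and $E$ furnish Sobolev embeddings and elliptic estimates with base-point-independent constants, which is what the stated uniformity in $x_0$ demands.
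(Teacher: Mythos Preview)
Your overall strategy---finite propagation speed to identify the short-time pieces, then a tail estimate---is precisely the paper's. But the tail estimate as written contains a genuine error. With the \emph{sharp} truncation
\[
\psi_{>T}(s)=\frac{1}{\pi}\int_T^\infty\hat\psi(t)\cos(ts)\,dt,
\]
the claim $\sup_{s}|(1+s^2)^k\psi_{>T_p}(s)|\leq C_{k,N}p^{-N/2}$ is false. When you integrate by parts via $(1+s^2)^k\cos(ts)=(1-\partial_t^2)^k\cos(ts)$, boundary terms at $t=T_p$ appear; already for $k=1$ one finds
\[
(1+s^2)\,\psi_{>T}(s)=-\frac{1}{\pi}\,s\,\hat\psi(T)\sin(Ts)-\frac{1}{\pi}\,\hat\psi'(T)\cos(Ts)+\frac{1}{\pi}\int_T^\infty(\hat\psi-\hat\psi'')(t)\cos(ts)\,dt.
\]
The first term grows linearly in $s$, so the supremum is infinite whenever $\hat\psi(T_p)\neq 0$. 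Since the spectrum of $\sqrt{H_p+M}$ is unbounded, this kills the subsequent operator-norm bound on $(1+H_p+M)^k\psi_{>T_p}(\sqrt{H_p+M})$ and hence the kernel estimate.

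The paper avoids this by using a \emph{smooth} cutoff: it multiplies the Fourier side by $\chi(t/\sqrt{p})$ with $\chi\in C^\infty_c(\RR)$, $\chi\equiv 1$ near $0$ and $\supp\chi\subset(-(\varepsilon-\varepsilon_1),\varepsilon-\varepsilon_1)$. The tail function $\varphi_{2,p}$, i.e.\ the inverse Fourier transform of $(1-\chi(\cdot/\sqrt{p}))\hat\Phi$ (with $\Phi(s)=\varphi(s^2-M)$), is then Schwartz with every seminorm $O(p^{-\infty})$, because integration by parts produces no boundary contributions. With this single modification---replace your indicator $\mathbf 1_{[0,T_p]}$ by a smooth bump supported in $[0,T_p]$---your argument goes through and coincides with the paper's; your final Sobolev-embedding step is fine (any stray powers of $p$ from the resolvents are absorbed since $N$ is arbitrary).
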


\begin{proof}
As in \cite[Proposition 1.3]{ma-ma08}, this follows from \eqref{e:Hp=HX0p} and the finite propagation speed property of solutions of hyperbolic equations. Let us briefly recall the argument. Without loss of generality, we can assume that $H_p$ and $H^{(x_0)}_p$ are positive operators. Then we can write 
\[
\varphi(H_p)=\int_{-\infty}^{+\infty}\hat{\varphi}(t)e^{it\sqrt{H_p}}dt, \quad \varphi(H^{(x_0)}_p)=\int_{-\infty}^{+\infty}\hat{\varphi}(t)e^{it\sqrt{H^{(x_0)}_p}}dt
\]
By the finite propagation speed property of solutions of hyperbolic equations, 
the Schwartz kernel of $e^{it\sqrt{H_p}}$ is supported in the set 
\[
\{(x,x^\prime)\in X\times X : d_X(x,x^\prime)<\frac{t}{\sqrt{p}}\},
\]
a similar statement holds for $e^{it\sqrt{H^{(x_0)}_p}}$. Moreover, for any $u \in {C}^\infty_c(X, L^p\otimes E)$ the restriction of $e^{it\sqrt{H_p}}u$ to $B^{X}(x_0,\varepsilon-\frac{t}{\sqrt{p}})$ depends only on the restriction of $H_p$ and $u$ to $B^{X}(x_0,\varepsilon)$.  

Therefore, by \eqref{e:Hp=HX0p}, for any $\varepsilon_1\in (0,\varepsilon)$, $u \in {C}^\infty_c(B^{X}(x_0,\varepsilon_1), L^p\otimes E)\cong {C}^\infty_c(B^{X_0}(0,\varepsilon_1), L^p\otimes E)$ and $\frac{t}{\sqrt{p}}<\varepsilon-\varepsilon_1$, we have
\begin{equation}\label{e:Hp=HX0p-exp}
e^{it\sqrt{H_p}}u(Z)=e^{it\sqrt{H^{(x_0)}_p}}_pu(Z).
\end{equation}

Take a cut-off function $\chi\in C^\infty_c(\mathbb R)$ such that $0\leq \chi(t)\leq 1$ for all $t\in \mathbb R$, $\chi(t)\equiv 1$ in a neighborhood of $0$ and $\chi(t)= 0$ for $|t|>\varepsilon-\varepsilon_1$. Write
\begin{align*}
\varphi(H_p)=&\int_{-\infty}^{+\infty}\chi(\frac{t}{\sqrt{p}})\hat{\varphi}(t)e^{it\sqrt{H_p}}dt+\int_{-\infty}^{+\infty}(1-\chi(\frac{t}{\sqrt{p}}))\hat{\varphi}(t)e^{it\sqrt{H_p}}dt\\
=&\varphi_{1,p}(H_p)+\varphi_{2,p}(H_p),
\end{align*}
similarly for $\varphi(H^{X_0}_p)$. By \eqref{e:Hp=HX0p-exp}, for any $u \in {C}^\infty_c(B^{X}(x_0,\varepsilon_1), L^p\otimes E)\cong {C}^\infty_c(B^{X_0}(0,\varepsilon_1), L^p\otimes E)$, we have 
\[
\varphi_{1,p}(H_p)u=\varphi_{1,p}(H^{(x_0)}_p)u,
\]
which implies that 
\[
K_{\varphi_{1,p}(H_p),x_0}(Z,Z^\prime)=K_{\varphi_{1,p}(H^{(x_0)}_p)}(Z,Z^\prime), \quad Z,Z^\prime \in B^{T_{x_0}X}(0,\varepsilon_1). 
\]
On the other hand, using the fact that $\varphi_{2,p}=\mathcal O(p^{-\infty})$ in Schwartz class, it is easy to see that for any $k\in \mathbb N$ and $N\in \NN$, there exists $C>0$ such that 
 \[
|K_{\varphi_{2,p}(H_p),x_0}(Z,Z^\prime)|_{\mathcal C^k}+|K_{\varphi_{2,p}(H^{(x_0)}_p)}(Z,Z^\prime)|_{\mathcal C^k}\leq Cp^{-N}
\]
for any $p\in\NN$, $x_0\in X$  and $Z,Z^\prime\in B^{T_{x_0}X}(0,\varepsilon_1)$.
\end{proof}

Proposition~\ref{p:Pq-difference} reduces our considerations to the $C^\infty_b$-bounded family $H^{(x_0)}_{p}$ of second order elliptic differential operators acting on $C^\infty(T_{x_0}X, L_0^p\otimes E_0)\cong C^\infty(T_{x_0}X,E_{x_0})$ (parametrized by $x_0\in X$). Note that, once we moved to the Euclidean space, we can consider $p$ to be a real-valued parameter.

 \section{Rescaling and formal expansions}\label{scale}
We use the rescaling introduced in \cite[Section 1.2]{ma-ma08}. 

Denote $t=\frac{1}{\sqrt{p}}$. For $s\in C^\infty(T_{x_0}X, E_{x_0})$, set
\[
S_ts(Z)=s(Z/t), \quad Z\in T_{x_0}X.
\]
Consider the rescaling of the operator $H_p^{(x_0)}$ defined by 
\begin{equation}\label{scaling}
\mathcal H_t=S^{-1}_t\kappa^{\frac 12}H_p^{(x_0)}\kappa^{-\frac 12}S_t,
\end{equation}
where $\kappa=\kappa_{x_0}$ is defined in \eqref{e:def-kappa}.

By construction, it is a self-adjoint operator in $L^2(T_{x_0}X, E_{x_0})$, and its spectrum coincides with the spectrum of $H_p^{(x_0)}$.

From now on, we choose an orthonormal basis $\mathbf e=\{e_j, j=1,2,\ldots,d\}$ of $T_{x_0}X$. It gives rise to an isomorphism $T_{x_0}X\cong \mathbb R^{d}$. We will consider each $e_j$ as a constant vector field on $T_{x_0}X$ and use the same notation $e_j$ for the corresponding vector field on $\mathbb R^d$.  So $\{e_j, j=1,2,\ldots,d\}$ is the standard basis of $\mathbb R^d$.

We get a family of second order differential operators on $C^\infty(\RR^d, E_{x_0})$, which depend on the orthonormal basis $\mathbf e$ and will be also denoted by $\mathcal H_t$.  For simplicity of notation, we will skip $\mathbf e$. The operator $\mathcal H_t$ is given by the formula
\begin{equation}\label{e:Ht}  
\mathcal H_t=-\sum_{j,k=1}^{d} g^{jk}(tZ)\left[\nabla_{t,e_j}\nabla_{t,e_k}- t\sum_{\ell=1}^{d}\Gamma^{\ell}_{jk}(tZ)\nabla_{t,e_\ell}\right]+V(tZ),
\end{equation}
where the functions $\Gamma^{\ell}_{jk}$ are defined by the equality 
$\nabla_{e_j}e_k=\sum_{\ell=1}^d \Gamma^{\ell}_{jk}e_\ell$ and the rescaled connection $\nabla_t$ is given by
\begin{equation}\label{e:nablat}
\nabla_t=tS^{-1}_t\kappa^{\frac 12}\nabla^{L_0^p}\kappa^{-\frac 12}S_t.
\end{equation}

Let $\Gamma^{L_0}=\sum_{j=1}^{d}\Gamma_j^{L_0}(Z)dZ_j$, $\Gamma_j^{L_0}\in C^\infty(\mathbb R^{d})$, and $\Gamma^{E_0}=\sum_{j=1}^{d}\Gamma_j^{E_0}(Z)dZ_j$, $\Gamma_j^{E_0}\in C^\infty(\mathbb R^{d}, \operatorname{End}(E_{x_0}))$, be the connection forms of $\nabla^{L_0}$ and $\nabla^{E_0}$, respectively. Then 
\begin{equation}\label{E:nablatej}
\begin{split}
\nabla_{t,e_j}&=\kappa^{\frac 12}(tZ)\left(\nabla_{e_j}+\frac{1}{t}\Gamma^{L_0}_j(tZ)+t\Gamma^{E_0}_j(tZ)\right)\kappa^{-\frac 12}(tZ)\\
&=\nabla_{e_i}+\frac{1}{t}\Gamma^{L_0}_j(tZ)+t\Gamma^{E_0}_j(tZ)-t\left(\kappa^{-1}(e_j\kappa)\right)(tZ).
\end{split}
\end{equation}

Recall that 
\[
\Gamma^{L_0}_j(Z)=\frac{1}{2}R^L_{x_0}(\mathcal R,e_j)+O(|Z|^2). 
\]
This shows that the operators $\nabla_{t,e_i}$ depend smoothly on $t$ up to $t=0$, and the limit $\nabla_{0,e_j}$ of the connection $\nabla_{t,e_j}$ as $t\to 0$ coincides with the connection $\nabla^{(x_0)}$ given by \eqref{e:nablaL0}:
\begin{equation}\label{E:nabla0ej}
\nabla_{0,e_j}=\nabla_{e_j}+\frac{1}{2}R^L_{x_0}(\mathcal R, e_j)=\nabla^{(x_0)}_{e_j}. 
\end{equation}
 
Now we expand the coefficients of the operator $\mathcal H_t$ in Taylor series in $t$. For any $m\in \NN$, we get
\begin{equation}\label{e:Ht-formal}
\mathcal H_t=\mathcal H^{(0)}+\sum_{j=1}^m \mathcal H^{(j)}t^j+\mathcal O(t^{m+1}), 
\end{equation}
where there exists $m^\prime\in \NN$ so that for every $k\in\NN$ and $t\in [0,1]$ the derivatives up to order $k$ of the coefficients of the operator $\mathcal O(t^{m+1})$ are bounded by $Ct^{m+1}(1+|Z|)^{m^\prime}$. 

The leading term $\mathcal H^{(0)}$ coincides with the operator $\mathcal H^{(x_0)}$ given by \eqref{e:DeltaL0p}:
\[  
\mathcal H^{(0)}=-\sum_{j=1}^{d} (\nabla_{0,e_j})^2+V^{(x_0)}(0)=\mathcal H^{(x_0)}. 
\]
By \cite[Theorem 1.4]{ma-ma08}, the next terms $\mathcal H^{(j)}, j\geq 1,$ have the form
\begin{equation}\label{e:Hj}
\mathcal H^{(j)}=\sum_{k,\ell=1}^{d} a_{k\ell,j}\frac{\partial^2}{\partial Z_k\partial Z_\ell}+\sum_{k=1}^{d} b_{k,j}\frac{\partial}{\partial Z_k}+c_{j},
\end{equation}
where $a_{k\ell,j}$ is a homogeneous polynomial in $Z$ of degree $j$,  $b_{kj}$ is a polynomial in $Z$ of degree $\leq j+1$ (of the same parity with $j-1$) and $c_{j}$ is a polynomial in $Z$ of degree $\leq j+2$ (of the same parity with $j$). More precisely, for the operator $H_p=\frac{1}{p}\Delta_p$, the operator $\mathcal H^{(j)}$ coincides with the operator $\mathcal O_j$ introduced in that theorem. In the general case, we have 
\[
\mathcal H^{(j)}=\mathcal O_j+\sum_{|\alpha|=j}(\partial^\alpha(V+\tau))_{x_0}\frac{Z^\alpha}{\alpha!},\quad j=1,2,\ldots,
\]
In \cite[Theorem 1.4]{ma-ma08}, explicit formulas are given for $\mathcal O_1$ and $\mathcal O_2$. We refer the reader to \cite{ma-ma08,ma-ma:book} for more details. 

\section{Norm estimates}\label{norm}
In this section, we will establish norm estimates for the operators $\varphi(\mathcal H_{t})$ and its derivatives of any order with respect to $t$. We start with the resolvent of $\mathcal H_t$.

For $t>0$, set 
\[
\|s\|^2_{t,0}=\|s\|^2_{0}=\int_{\mathbb R^{d}}|s(Z)|^2dv_{X,x_0}(Z), \quad s\in C^\infty_c(\mathbb R^{d},E_{x_0}),
\]
and, for any $m\in \mathbb N$ and $t>0$,
\[
\|s\|^2_{t,m}=\sum_{\ell=0}^m\sum_{j_1,\ldots,j_\ell=1}^{d}\|\nabla_{t,e_{j_1}}\cdots \nabla_{t,e_{j_\ell}}s\|^2_{t,0},\quad s\in C^\infty_c(\mathbb R^{d},E_{x_0}), 
\]
where $\nabla_t$ is the rescaled connection defined by \eqref{e:nablat}. 

Let $\langle\cdot,\cdot\rangle_{t,m}$ denote the inner product on $C^\infty_c(\mathbb R^{d},E_{x_0})$ corresponding to $\|\cdot\|^2_{t,m}$. Let $H^m_t$ be the Sobolev space of order $m$ with norm $\|\cdot\|_{t,m}$. For any integer $m<0$, we define the Sobolev space $H^{m}_t$ by duality. For any bounded linear operator $A: H^m_t\to H^{m^\prime}_t$ with $m,m^\prime\in \mathbb Z$, we denote by $\|A\|^{m,m^\prime}_t$ its norm with respect to $\|\cdot\|_{t,m}$ and $\|\cdot\|_{t,m^\prime}$.  

For a fixed $t>0$, the norm $\|\cdot\|_{t,m}$ is equivalent to the standard Sobolev norm given for $m\in \mathbb N$, by 
\[
\|s\|^2_{H^m(\mathbb R^{d},E_{x_0})}=\sum_{\ell=0}^m\sum_{j_1,\ldots,j_\ell=1}^{d}\|\nabla_{e_{j_1}}\cdots \nabla_{e_{j_\ell}}s\|^2_{0},\quad s\in C^\infty_c(\mathbb R^{d},E_{x_0}).
\]
Therefore, the space $H^{m}_t$ coincides with the usual Sobolev space
$H^m(\mathbb R^{d},E_{x_0})$ as a topological vector space. But this norm equivalence is not uniform as $t\to 0$. In order to have control of the Sobolev norms by the norms $\|\cdot\|_{t,m}$, uniform up to $t=0$, we introduce some weighted Sobolev norms with power weights as in \cite{ma-ma08}.

For $\alpha\in \ZZ^d_+$, we will use the standard notation
\[
Z^\alpha=Z_1^{\alpha_1}Z_2^{\alpha_2}\ldots Z_d^{\alpha_d}, \quad Z\in \RR^d. 
\]
For any $t>0$, $m\in \mathbb Z$, and $M\in \ZZ_+$, we set
\[
\|s\|_{t,m,M}:= \sum_{|\alpha|\leq M}\left\|Z^{\alpha} s\right\|_{t,m}, \quad s\in C^\infty_c(\mathbb R^{d},E_{x_0}).
\]
By \eqref{E:nablatej}, any operator of the form $\nabla_{e_{j_1}}\cdots \nabla_{e_{j_\ell}}$ can be written as 
\begin{equation}\label{e:nabla-nablat}
\nabla_{e_{j_1}}\cdots \nabla_{e_{j_\ell}}=\sum_{k=0}^\ell \sum_{i_1,\ldots,i_k} A_{t,i_1,\ldots,i_k} \nabla_{t, e_{i_1}}\cdots \nabla_{t,e_{i_k}}, \quad t>0,
\end{equation}
where $A_{t,i_1,\ldots,i_k}\in C^\infty(\mathbb R^{d},\operatorname{End}(E_{x_0}))$ satisfies the following condition: for any $\beta\in \mathbb Z_+^{d}$, there exists $C_\beta>0$ such that
\[
|\nabla^{\beta_1}_{e_1}\ldots \nabla^{\beta_{d}}_{e_{d}} A_{t,i_1,\ldots,i_k} (Z)| <C_\beta (1+|Z|)^\ell, \quad Z\in \mathbb R^{d},\quad t\in (0,1].
\]
By \eqref{e:nabla-nablat}, it follows that, for any $m\in \NN$, there exists $C>0$ such that 
\begin{equation}\label{e:Hm-Hmm}
\|s\|_{H^m(\mathbb R^{d},E_{x_0})} \leq C \|s\|_{t,m,m},\quad t\in (0,1], \quad s\in C^\infty(\mathbb R^{d},E_{x_0}).
\end{equation}

Moreover, we will use an approach to the proof of the full-diagonal expansions developed in \cite{Kor18}. Therefore, we introduce an additional family of weight functions parameterized by a point $W\in \mathbb R^d$. Unlike \cite{Kor18} where these weight functions are exponential, we will consider power weights. 

For any $t>0$, $m\in \mathbb Z$, $M,N\in \ZZ_+$ and $W\in \RR^d$, we set
\[
\|s\|_{t,m,M,N,W}= \sum_{|\alpha|\leq M,|\beta|\leq N}\left\|Z^{\alpha}(Z-W)^\beta s\right\|_{t,m}, \quad s\in C^\infty_c(\mathbb R^{d},E_{x_0}).
\]
It is clear that
\[
\|s\|_{t,m,M,0,W}=\|s\|_{t,m,M}.
\]

Since $\mathcal H_t$ is self-adjoint and uniformly elliptic, for any $t>0$ and $\lambda\in \CC\setminus \RR$, the operator $\lambda-\mathcal H_{t}$ is invertible in $L^2(\mathbb R^{d},E_{x_0})$, and the inverse operator $\left(\lambda-\mathcal H_{t}\right)^{-1}$ maps $H^m(\mathbb R^{d},E_{x_0})$ to $H^{m+2}(\mathbb R^{d},E_{x_0})$.

\begin{thm}\label{Thm1.9}
For any $t\in (0,1]$, $m\in \mathbb N$, $M,N\in \ZZ_+$, $W\in \RR^d$ and $\lambda=\mu+i\nu\in \CC\setminus \RR$, we have 
\begin{equation}\label{e:mm+2t}
\left\|(\lambda-\mathcal H_{t})^{-1}s\right\|_{t,m+2,M,N,W}\leq C|\nu|^{-M-N-1}\left\|s\right\|_{t,m,M,N,W}, 
\end{equation}
where $s\in C^\infty_c(\mathbb R^{d},E_{x_0})$ and $C=C_{m,M,N}>0$ is independent of $t\in (0,1]$, $\lambda\in  \CC\setminus \RR$, $W\in \RR^d$ and $x_0\in X$.
\end{thm}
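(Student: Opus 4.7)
The plan is induction on $M+N$, reducing everything to the base case $M=N=0$, which I would handle by uniform elliptic regularity of the rescaled family $\mathcal H_t$ combined with the self-adjoint $L^2$-resolvent bound. First, self-adjointness of $\mathcal H_t$ in $L^2(\RR^d,E_{x_0})$ yields $\|(\lambda-\mathcal H_t)^{-1}\|^{0,0}_t\le |\nu|^{-1}$. To promote this to the $\|\cdot\|_{t,m+2}$ norm, I would establish a G{\aa}rding-type inequality
\[
\langle\mathcal H_t u,u\rangle_{t,0}\ge c\|u\|_{t,1}^2-C\|u\|_{t,0}^2,\qquad t\in(0,1],
\]
valid uniformly in $t$ and $x_0$ thanks to the bounded-geometry assumptions of Section \ref{local}, together with iterated commutators $[\nabla_{t,e_j},\mathcal H_t]$ whose coefficients remain uniformly bounded because the curvature $[\nabla_{t,e_j},\nabla_{t,e_k}]$ equals $R^L_{tZ}(e_j,e_k)$ plus an $O(t^2)$ term. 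The outcome is the uniform elliptic regularity $\|u\|_{t,m+2}\le C_m(\|u\|_{t,m}+\|\mathcal H_t u\|_{t,m})$, which together with the $L^2$-resolvent bound yields the base case once one writes $\mathcal H_t u=\lambda u-s$.

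The algebraic fact powering the inductive step is that, since $\nabla_{t,e_k}$ differs from $\partial_k$ only by a multiplication operator, one has $[Z_j,\nabla_{t,e_k}]=-\delta_{jk}$, with the identical identity holding for $(Z-W)_j$ \emph{independently of $W$}. A direct computation then gives
\[
[Z_j,\mathcal H_t]=2\sum_{k=1}^{d}g^{jk}(tZ)\nabla_{t,e_k}+\text{(bounded multiplication)},
\]
and analogously for $[(Z-W)_j,\mathcal H_t]$: each is a first-order differential operator with uniformly bounded, $W$-independent coefficients.

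Assuming the conclusion for all weight pairs of total index strictly less than $M+N$, I pick any factor $Y\in\{Z_j,(Z-W)_j\}$ in $Z^\alpha(Z-W)^\beta$ and apply the resolvent identity
\[
Y(\lambda-\mathcal H_t)^{-1}=(\lambda-\mathcal H_t)^{-1}Y+(\lambda-\mathcal H_t)^{-1}[Y,\mathcal H_t](\lambda-\mathcal H_t)^{-1}.
\]
The first term on the right transfers one weight onto the source, which is absorbed into $\|s\|_{t,m,M,N,W}$. The second term carries an additional resolvent that supplies a further factor of $|\nu|^{-1}$, while the first-order operator $[Y,\mathcal H_t]$ in the middle is controlled by the base-case regularity estimate applied to $(\lambda-\mathcal H_t)^{-1}s$. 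Iterating this $M+N$ times, combined with the single $|\nu|^{-1}$ from the base case, produces the claimed exponent $M+N+1$.

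The main obstacle is arranging the induction so that each weight is moved past the resolvent at cost of exactly $|\nu|^{-1}$, rather than the weaker $|\nu|^{-1/2}$ that interpolation via $\langle\mathcal H_t u,u\rangle^{1/2}$ would yield. I would achieve this by reinvoking the base-case elliptic regularity bound \emph{inside} every inductive step, so that first-order operators sandwiched between two resolvents are bounded using $\|(\lambda-\mathcal H_t)^{-1}\|^{m,m+2}_t\le C|\nu|^{-1}$ directly. Uniformity in $t\in(0,1]$, $x_0\in X$, and $W\in\RR^d$ is preserved throughout because all coefficient functions $g^{jk}(tZ)$, $\Gamma^\ell_{jk}(tZ)$, and $V^{(x_0)}(tZ)$ have uniformly bounded derivatives of all orders, while the commutator identities $[Y,\nabla_{t,e_k}]=-\delta_{jk}$ are $W$-independent.
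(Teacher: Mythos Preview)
Your proposal is correct and follows essentially the same strategy as the paper: induction on the total weight degree $M+N$, using the resolvent commutator identity together with the scalar commutation $[Z_\ell,\nabla_{t,e_j}]=-\delta_{j\ell}$ to reduce to lower weight.

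There are two minor organizational differences worth noting. First, for the base case $M=N=0$ you sketch a direct proof via a uniform G{\aa}rding inequality and commutator-based elliptic regularity; the paper instead exploits the unitary equivalence $U_t=t^{n}S_t^{-1}\kappa^{1/2}$ between $\mathcal H_t$ and $H_p^{(x_0)}$ to import the estimate from \cite[Theorem~4]{higherLL}. Your route is more self-contained, the paper's is shorter given the cited result. Second, in the inductive step you peel off one linear factor $Y$ at a time, whereas the paper commutes the entire monomial $Z^\alpha(Z-W)^\beta$ with $\mathcal H_t$ in a single stroke, observing via \eqref{e:Z-nabla1-com} that $[Z^\alpha(Z-W)^\beta,\mathcal H_t]$ is a first-order operator whose polynomial coefficients already have total degree $M+N-1$. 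This organization is a bit cleaner for the exponent count---apply the base case ($|\nu|^{-1}$) to the outer resolvent and the induction hypothesis ($|\nu|^{-M-N}$) to the inner one---and avoids the iterated bookkeeping your one-factor-at-a-time scheme requires; but the two schemes are equivalent.
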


\begin{proof}
For any $t>0$, we have an isometric isomorphism
\[
U_t =t^nS^{-1}_t\kappa^{\frac 12} : L^2(T_{x_0}X,L_0^p\otimes E_0)\to H^0_t.
\]
We have
\[
\nabla_t=U_t\left(\frac{1}{\sqrt{p}}\nabla^{L^p_0\otimes E_0}\right)U^{-1}_{t}, \quad \mathcal H_t=U_tH^{(x_0)}_pU^{-1}_{t}.
\]
Therefore, $U_t$ extends to an isometric isomorphism
\[
U_t : H^m(T_{x_0}X,L^p_0\otimes E_0)\to H^m_t,\quad m\in \ZZ.
\] 
Therefore, the estimate \eqref{e:mm+2t} for $M=N=0$ follows from a similar estimate for the operator $H^{(x_0)}_p$ proved in \cite[Theorem~4]{higherLL}. We only note that the proof of the latter theorem does not use the non-degeneracy of the curvature $R^L$.  

Now we proceed by induction. We assume that \eqref{e:mm+2t} holds for all $M,N\in \ZZ_+$ with $M+N=k$ and for any $W\in \RR^d$. Take $\alpha, \beta\in \ZZ^d_+$ with $|\alpha|=M, |\beta|=N$, $M+N=k+1$. Then we write
\begin{multline}\label{e:est1}
\left\|Z^{\alpha}(Z-W)^\beta (\lambda-\mathcal H_{t})^{-1} s\right\|_{t,m}\\ 
\leq \left\|(\lambda-\mathcal H_{t})^{-1}Z^{\alpha}(Z-W)^\beta s\right\|_{t,m} +\left\|[Z^{\alpha}(Z-W)^\beta,(\lambda-\mathcal H_{t})^{-1}] s\right\|_{t,m}
\end{multline}
and
\begin{equation}\label{e:comm-res}
[Z^{\alpha}(Z-W)^\beta,(\lambda-\mathcal H_{t})^{-1}]=(\lambda-\mathcal H_{t})^{-1}[Z^{\alpha}(Z-W)^\beta, \mathcal H_{t}](\lambda-\mathcal H_{t})^{-1}.
\end{equation}
By \eqref{e:Ht}, the commutator $[Z^{\alpha}(Z-W)^\beta, \mathcal H_{t}]$ is a first order differential operator given by
%\[
%[Z_\ell, \mathcal H_{t}]= -\sum_{j,k=1}^{d} g^{jk}(tZ)\left[\delta_{j\ell}\nabla_{t;e_k}+\delta_{k\ell}\nabla_{t;e_j}- t\Gamma^{\ell}_{jk}(tZ)\right].
%\]
\begin{multline*}
[Z^{\alpha}(Z-W)^\beta,\mathcal H_t] \\ =-\sum_{j,k=1}^{d} g^{jk}(tZ)\Big[[Z^{\alpha}(Z-W)^\beta,\nabla_{t,e_j}]\nabla_{t,e_k}+\nabla_{t,e_j}[Z^{\alpha}(Z-W)^\beta,\nabla_{t,e_k}]\\ - t\sum_{\ell=1}^{d}\Gamma^{\ell}_{jk}(tZ)[Z^{\alpha}(Z-W)^\beta,\nabla_{t,e_\ell}]\Big].
\end{multline*}

Using the commutation relation
\begin{equation}\label{e:Z-nabla-com}
[Z_\ell,\nabla_{t;e_j}]=\delta_{j\ell},
\end{equation}
we get 
\begin{equation}\label{e:Z-nabla1-com}
[Z^{\alpha}(Z-W)^\beta,\nabla_{t;e_j}]=\alpha_\ell Z^{\alpha-\delta_\ell}(Z-W)^\beta+\beta_\ell Z^{\alpha}(Z-W)^{\beta-\delta_\ell},
\end{equation}
where $\delta_\ell\in \ZZ^d_+$ is given by $(\delta_{\ell})_j=\delta_{\ell j}, j=1,2,\ldots,d$. 
Therefore, for any $t\in (0,1]$, $m\in \mathbb N$, $M,N\in \ZZ_+$ and $W\in \RR^d$, we have
\begin{equation}\label{e:est-comm}
\|[Z^{\alpha}(Z-W)^\beta,\nabla_{t;e_j}]s\|_{t,m}\leq C_1 (\|s\|_{t,m,M-1,N,W} +\|s\|_{t,m,M,N-1,W})
\end{equation}
and 
\begin{equation}\label{e:est-nablat}
\|\nabla_{t;e_j} s\|_{t,m,M,N,W}\leq C_2\|s\|_{t,m+1,M,N,W}, \quad s\in C^\infty_c(\mathbb R^{d},E_{x_0}),
\end{equation}
%and 
%\[
%\|\mathcal H_t s\|_{t,m,M,N,W}\leq C_2\|s\|_{t,m+2,M+2,N,W}, \quad s\in C^\infty_c(\mathbb R^{d},E_{x_0}),
%\]
where $C_1, C_2>0$ are independent of $t\in (0,1]$, $W\in \RR^d$ and $x_0\in X$. Here we set $\|s\|_{t,m^\prime,-1,N,W}=\|s\|_{t,m^\prime,M,-1,W}=0$.

Using \eqref{e:est-comm} and \eqref{e:est-nablat}, we conclude that, for any $t\in (0,1]$, $m^\prime\in \mathbb N$, $M,N\in \ZZ_+$ and $W\in \RR^d$,
\begin{multline}\label{e:est-comm-res}
\|[Z^{\alpha}(Z-W)^\beta,\mathcal H_t]s\|_{t,m^\prime}\\ \leq C (\|s\|_{t,m^\prime+1,M-1,N,W} +\|s\|_{t,m^\prime+1,M,N-1,W}).
\end{multline}

Using induction hypothesis, \eqref{e:comm-res} and \eqref{e:est-comm-res}, we proceed in \eqref{e:est1} as follows
\begin{multline*}
\left\|Z^{\alpha}(Z-W)^\beta (\lambda-\mathcal H_{t})^{-1} s\right\|_{t,m}\\ 
\begin{aligned}
\leq & C |\nu|^{-1}  \left(\left\|s\right\|_{t,m-2,M,N,W}+\left\|[Z^{\alpha}(Z-W)^\beta, \mathcal H_{t}] (\lambda-\mathcal H_{t})^{-1} s\right\|_{t,m-2}\right)\\
\leq & C |\nu|^{-1} \Big(\left\|s\right\|_{t,m-2,M,N,W}+\left\|(\lambda-\mathcal H_{t})^{-1} s\right\|_{t,m-1,M-1,N,W}\\ & +\left\|(\lambda-\mathcal H_{t})^{-1} s\right\|_{t,m-1,M,N-1,W}\Big)\\
\leq & C |\nu|^{-M-N-1}\|s\|_{t,m-2,M,N,W},
\end{aligned}
\end{multline*}
that completes the proof of \eqref{e:mm+2t}. 
\end{proof}

By the Helffer-Sjostrand formula \cite{HS-LNP345}, we have  
\begin{equation}\label{e:HS}
\varphi(\mathcal H_{t})=-\frac{1}{\pi }
\int_\CC \frac{\partial \tilde{\varphi}}{\partial \bar \lambda}(\lambda)(\lambda-\mathcal H_{t})^{-1}d\mu d\nu,
\end{equation}
where $\tilde{\varphi}\in C^\infty_c(\CC)$ is an almost-analytic extension of $\varphi$ satisfying 
\[
\frac{\partial \tilde{\varphi}}{\partial \bar \lambda}(\lambda)=O(|\nu|^\ell),\quad \lambda=\mu+i\nu, \quad \nu\to 0,
\]
for any $\ell\in \NN$.

Let us consider the function $\psi(\lambda)=\varphi(\lambda)(a-\lambda)^K$ with any $K\in \NN$ and $a>0$. Then its almost-analytic extension $\tilde{\psi}$ can be taken to be $\tilde{\psi}(\lambda)=\tilde{\varphi}(\lambda)(a-\lambda)^K$. If we apply the formula \eqref{e:HS} to $\psi$, then we get 
\begin{equation}\label{e:HS1}
\varphi(\mathcal H_{t})=-\frac{1}{\pi }
\int_\CC \frac{\partial \tilde{\varphi}}{\partial \bar \lambda}(\lambda) (a-\lambda)^K (\lambda-\mathcal H_{t})^{-1}(a-\mathcal H_{t})^{-K}d\mu d\nu.
\end{equation}

\begin{prop}
For any $t\in (0,1]$ and $m, m^\prime\in \ZZ$, the operator $\varphi(\mathcal H_{t})$ extends to a bounded operator from $H^m_t$  to $H^{m^\prime}_t$ with the following norm estimate for any $M,N\in \mathbb Z_+$ and $W\in \RR^d$:
\begin{equation}\label{e:varphi-Ht}
\left\|\varphi(\mathcal H_{t})s\right\|_{t,m^\prime,M,N,W}\leq C\left\|s\right\|_{t,m,M,N,W}, \quad s\in C^\infty_c(\mathbb R^{d},E_{x_0}),
\end{equation}
where $C=C_{M,N,m.m^\prime}>0$ is independent of $t\in (0,1]$, $W\in \RR^d$ and $x_0\in X$.  
\end{prop}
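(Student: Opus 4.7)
The strategy is to apply the Helffer--Sj\"ostrand representation \eqref{e:HS1} with the smoothing power $K$ chosen large enough to absorb the Sobolev loss from $H^m_t$ to $H^{m^\prime}_t$, and then insert the weighted resolvent estimate of Theorem~\ref{Thm1.9}. Since $\|\cdot\|_{t,m,M,N,W}$ is monotone in the Sobolev index, it suffices to treat $m^\prime\geq m$; the case $m^\prime<m$ then follows from the inclusion $\|\cdot\|_{t,m^\prime,M,N,W}\leq \|\cdot\|_{t,m,M,N,W}$. As in the discussion preceding \eqref{e:HS1} we may assume $\mathcal H_t\geq 0$, so every $a<0$ lies in its resolvent set with $\|(a-\mathcal H_t)^{-1}\|^{0,0}_t\leq |a|^{-1}$.

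The first step is to prove a real-parameter analogue of Theorem~\ref{Thm1.9}: for every $a<0$, $m\in \NN$, $M,N\in \ZZ_+$ and $W\in \RR^d$,
\begin{equation*}
\|(a-\mathcal H_t)^{-1}s\|_{t,m+2,M,N,W}\leq C\|s\|_{t,m,M,N,W},
\end{equation*}
with $C$ independent of $t\in(0,1]$, $W$ and $x_0$. This is obtained by repeating the induction from the proof of Theorem~\ref{Thm1.9} verbatim, replacing $|\nu|^{-1}$ by $|a|^{-1}$ throughout: the commutator identity \eqref{e:comm-res} and the bound \eqref{e:est-comm-res} for $[Z^{\alpha}(Z-W)^\beta,\mathcal H_t]$ depend only on the structure of $\mathcal H_t$, while the unweighted base case $\|(a-\mathcal H_t)^{-1}\|^{m,m+2}_t\leq C$ is standard uniform elliptic regularity under the bounded-geometry hypotheses. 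Iterating $K$ times and extending to $m\in\ZZ$ by duality and monotonicity gives
\begin{equation*}
\|(a-\mathcal H_t)^{-K}s\|_{t,m+2K,M,N,W}\leq C_K \|s\|_{t,m,M,N,W}.
\end{equation*}

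Next, fix $K\in\NN$ with $m+2K\geq m^\prime-2$ and substitute the above bounds into \eqref{e:HS1}. Pulling the weighted norm inside the integral via Minkowski's inequality, applying Theorem~\ref{Thm1.9} to $(\lambda-\mathcal H_t)^{-1}$ with source index $m^\prime-2$, using monotonicity $m^\prime-2\leq m+2K$, and finally the iterated real-parameter bound, one arrives at
\begin{equation*}
\|\varphi(\mathcal H_t)s\|_{t,m^\prime,M,N,W}
\leq C\,\|s\|_{t,m,M,N,W}\int_\CC \left|\frac{\partial \tilde\varphi}{\partial \bar\lambda}(\lambda)\right||a-\lambda|^K|\nu|^{-M-N-1}\,d\mu\,d\nu.
\end{equation*}
Since $\tilde\varphi\in C^\infty_c(\CC)$ and may be chosen so that $\partial_{\bar\lambda}\tilde\varphi=O(|\nu|^{M+N+1})$, the remaining scalar integral is finite, which yields \eqref{e:varphi-Ht}.

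The main obstacle is establishing the real-parameter resolvent bound uniformly in the weights, but once one recognizes that the induction in the proof of Theorem~\ref{Thm1.9} uses only an $L^2\to L^2$ bound on the resolvent together with the commutator structure of $\mathcal H_t$, this extension is immediate, and the convergence of the Helffer--Sj\"ostrand integral is then routine.
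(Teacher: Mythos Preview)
Your argument is correct and follows the same route as the paper: both combine the Helffer--Sj\"ostrand representation \eqref{e:HS1} with the weighted resolvent estimate of Theorem~\ref{Thm1.9}, choosing $K$ large enough to bridge the Sobolev gap. The paper's proof simply asserts the composite bound $\|(\lambda-\mathcal H_t)^{-1}(a-\mathcal H_t)^{-K}s\|_{t,m',M,N,W}\leq C|\nu|^{-M-N-1}\|s\|_{t,m'-2(K+1),M,N,W}$ as a consequence of Theorem~\ref{Thm1.9}; you are more explicit in isolating and justifying the real-parameter weighted bound for $(a-\mathcal H_t)^{-K}$, which is indeed the point the paper leaves implicit.
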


\begin{proof}
 By Theorem \ref{Thm1.9}, it follows that, for any $m^\prime, K\in \NN$ and $M,N\in \mathbb Z_+$, there exists $C>0$ such that, for all $t\in (0,1]$, and $\lambda\in\CC\setminus \RR$
\begin{multline*}
\left\|(\lambda - \mathcal H_{t})^{-1} (a - \mathcal H_{t})^{-K}s\right\|_{t,m^\prime,M,N,W}\\ \leq C|\nu|^{-M-N-1}\left\|s\right\|_{t,m^\prime-2(K+1),M,N,W}, \quad s\in C^\infty_c(\mathbb R^{d},E_{x_0}). 
\end{multline*}
By the above estimates, the desired statement follows immediately from the formula \eqref{e:HS1} with  appropriate $K$.
\end{proof}

\begin{thm}\label{est-rem}
For any $r\geq 0$, $m, m^\prime\in \ZZ$, and $M,N\in \mathbb Z_+$, there exists $C>0$ such that, for any $t\in (0,t_0]$ and $W\in \RR^d$, 
\[
\left\|\frac{\partial^r}{\partial t^r}\varphi(\mathcal H_{t}) s\right\|_{t,m,M,N,W}\leq C\|s\|_{t,m^\prime,M+2r,N,W}, \quad s\in C^\infty_c(\mathbb R^{d},E_{x_0}).
\] 
\end{thm}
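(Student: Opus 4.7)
I would prove this by induction on $r$, combining the Helffer--Sjöstrand formula \eqref{e:HS1} with Theorem \ref{Thm1.9}. The case $r=0$ is the previous proposition \eqref{e:varphi-Ht}. For $r\ge 1$, I would differentiate \eqref{e:HS1} under the integral sign $r$ times in $t$ and use iterated Leibniz together with the resolvent identity $\partial_t(\lambda-\mathcal H_t)^{-1}=(\lambda-\mathcal H_t)^{-1}(\partial_t\mathcal H_t)(\lambda-\mathcal H_t)^{-1}$. This expresses $\partial_t^r\left[(\lambda-\mathcal H_t)^{-1}(a-\mathcal H_t)^{-K}\right]$ as a finite sum of operator products of the form
\[
R_0(\lambda)\,\partial_t^{k_1}\mathcal H_t\,R_1(\lambda)\,\partial_t^{k_2}\mathcal H_t\,\cdots\,\partial_t^{k_\ell}\mathcal H_t\,R_\ell(\lambda),
\]
where $k_i\ge 1$, $k_1+\cdots+k_\ell=r$ (so $\ell\le r$), and each $R_i(\lambda)$ is a product of resolvents of $\mathcal H_t$ at $\lambda$ and at $a$, with $\ell+K+1$ resolvent factors altogether.

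The central auxiliary estimate I would establish is that
\[
\|\partial_t^k\mathcal H_t\cdot u\|_{t,m,M,N,W}\le C\,\|u\|_{t,\,m+2,\,M+2k,\,N,\,W},
\]
uniformly in $t\in(0,1]$, $W\in\RR^d$ and $x_0\in X$, for every $k\ge 0$. This I would prove directly from \eqref{e:Ht} and \eqref{E:nablatej}: the coefficients $g^{jk}(tZ)$, $tg^{jk}\Gamma^{\ell}_{jk}(tZ)$, $V(tZ)$ of $\mathcal H_t$ (regarded as a differential operator in $\nabla_t$) are uniformly bounded on $(0,1]\times\RR^d$, and each $\partial_t$ acts either by differentiating such a coefficient---adding one factor of $Z$---or by differentiating a connection $\nabla_{t,e_j}$, which is multiplication by $\partial_t(\tfrac{1}{t}\Gamma^{L_0}_j(tZ)+t\Gamma^{E_0}_j(tZ)-t\kappa^{-1}(e_j\kappa)(tZ))$ of polynomial growth bounded by $|Z|^2$, reducing the differential order by one. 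Tracking these two rules through Leibniz shows that, in the decomposition of $\partial_t^k\mathcal H_t$ into its parts of orders $2$, $1$, $0$ in $\nabla_t$, the polynomial-growth degrees of the coefficients are bounded by $k$, $k+1$, $\min(k+2,2k)$ respectively. Moving the weights $Z^\alpha(Z-W)^\beta$ past $\nabla_{t,e_j}$ via the commutation identities \eqref{e:Z-nabla-com}--\eqref{e:Z-nabla1-com}, exactly as in the proof of Theorem \ref{Thm1.9}, then yields the auxiliary estimate (with the stated shift $M+2k$ being a convenient loose upper bound).

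Combining the auxiliary estimate with Theorem \ref{Thm1.9} applied to each of the $\ell+K+1$ resolvent factors in each chain produces, for every summand, a bound
\[
\|R_0(\lambda)\partial_t^{k_1}\mathcal H_t R_1(\lambda)\cdots R_\ell(\lambda)\,s\|_{t,m,M,N,W}\le C|\nu|^{-A}\,\|s\|_{t,\,m-2(\ell+K+1),\,M+2r,\,N,\,W},
\]
with some integer $A=A(M,N,r,K)$. Choosing $K$ large enough that $m-2(\ell+K+1)\le m'$ and selecting an almost-analytic extension $\tilde\varphi$ of $\varphi$ with $\partial_{\bar\lambda}\tilde\varphi(\lambda)=O(|\nu|^A)$, the $\lambda$-integral converges absolutely and delivers the theorem. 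The main obstacle is the careful polynomial-weight bookkeeping in the auxiliary estimate---specifically, exploiting that $\partial_t$ of the rescaled connection $\nabla_{t,e_j}$ only contributes polynomial factors of degree $\le 2$ (rather than the larger degree that would be produced if one differentiated $\mathcal H_t$ after expanding $\nabla_{t,e_j}$ in the fixed basis $\nabla_{e_j}$), so that each $t$-derivative costs at most $+2$ in the polynomial-weight index.
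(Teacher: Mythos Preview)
Your proposal is correct and follows essentially the same approach as the paper: differentiate the Helffer--Sj\"ostrand representation \eqref{e:HS1} in $t$, expand $\partial_t^r[(\lambda-\mathcal H_t)^{-1}(a-\mathcal H_t)^{-K}]$ via the iterated resolvent identity into chains $R\,\partial_t^{r_1}\mathcal H_t\,R\cdots\partial_t^{r_j}\mathcal H_t\,R$, bound each $\partial_t^{k}\mathcal H_t$ in the weighted norms using the explicit structure coming from \eqref{e:Ht} and \eqref{E:nablatej}, and then feed in Theorem~\ref{Thm1.9} factor by factor, choosing $K$ large enough to reach any target Sobolev index $m'$. Your careful tracking of the polynomial degrees (in particular the observation that differentiating the rescaled connection $\nabla_{t,e_j}$ contributes growth of order at most $|Z|^2$) is exactly what the paper records in \eqref{e:partial-rLt}--\eqref{e:partial-rL}, and your looser shift $M\mapsto M+2k$ per factor is a harmless upper bound that sums to the stated $M+2r$.
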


\begin{proof}
We proceed as in the proof of \cite[Theorem 1.10]{ma-ma08}. By \eqref{e:HS1}, we have 
\begin{equation}\label{e:diff-phi}
\frac{\partial^r}{\partial t^r}\varphi(\mathcal H_{t})=-\frac{1}{\pi }
\int_\CC \frac{\partial \tilde{\varphi}}{\partial \bar \lambda}(a-\lambda)^K \frac{\partial^r}{\partial t^r}\left[(\lambda-\mathcal H_{t})^{-1}(a-\mathcal H_{t})^{-K}\right] d\mu d\nu,
\end{equation}

We set 
\[
I_{r}=\left\{ {\mathbf r}=(r_1,\ldots,r_j) : \sum_{i=1}^jr_i=r, r_i\in \mathbb N \right\}. 
\]
Then we write
\begin{multline}\label{diff}
\frac{\partial^r}{\partial t^r}(\lambda-\mathcal H_{t})^{-1}\\ =\sum_{{\mathbf r}\in I_{r}} \frac{r!}{{\mathbf r}!} (\lambda - \mathcal H_{t})^{-1} \frac{\partial^{r_1}\mathcal H_{t}}{\partial t^{r_1}}(\lambda - \mathcal H_{t})^{-1}\cdots  \frac{\partial^{r_j}\mathcal H_{t}}{\partial t^{r_j}}(\lambda - \mathcal H_{t})^{-1}.
\end{multline}
 
By \eqref{E:nablatej}, we obtain that, for $r>0$, $\frac{\partial^r}{\partial t^r}\nabla_{t,e_j}$ is a function of the form $f_r(tZ)Z^\beta$ with $|\beta|\leq r+1$ and $f_r\in C^\infty_b(\RR^{d})$. We also have that, for $r>0$, $\frac{\partial^r}{\partial t^r}(g(tZ))$ is a function of the form $g_r(tZ)Z^\beta$ with $|\beta|\leq r$ and $g_r\in C^\infty_b(\RR^{d})$. Using these facts, \eqref{e:Ht} and \eqref{e:Z-nabla-com}, we infer that for any $r$ the operator $\frac{\partial^{r}\mathcal H_{t}}{\partial t^{r}}$ is a second order differential operator of the form
\begin{multline}\label{e:partial-rLt}
\frac{\partial^{r}\mathcal H_{t}}{\partial t^{r}}= \sum_{j,k=1}^{d}\sum_{|\beta|\leq r} A^{jk}_\beta (tZ)Z^\beta \nabla_{t;e_j}\nabla_{t;e_k}\\ +\sum_{\ell=1}^{d}\sum_{|\beta|\leq r+1} (B^{\ell}_{\beta}(tZ)+t\hat B^{\ell}_{\beta}(tZ))Z^\beta\nabla_{t,e_\ell}+\sum_{|\beta|\leq r+2} (C_\beta(tZ)+t\hat C_\beta(tZ)) Z^\beta.
\end{multline}
It follows that, for any $m\in \NN$ and $M,N\in \mathbb Z_+$, there exists $C>0$ such that, for all $t\in (0,1]$ and $W\in \RR^d$,  
\begin{equation}\label{e:partial-rL}
\left\|\frac{\partial^{r}\mathcal H_{t}}{\partial t^{r}} s\right\|_{t,m,M,N,W}\leq C\left\|s\right\|_{t,m+2,M+r,N,W}, \quad s\in C^\infty_c(\mathbb R^{d},E_{x_0}). 
\end{equation}

Using \eqref{diff}, \eqref{e:mm+2t} and \eqref{e:partial-rL}, we obtain that for any $m\in \ZZ$ and $M,N\in \mathbb Z_+$, there exists $C>0$ such that for $\lambda\in \CC\setminus\RR$, $t\in (0,1]$ and $W\in \RR^d$, 
\begin{multline}\label{e:res-mm-prime}
\left\| \frac{\partial^r}{\partial t^r}\left[(\lambda-\mathcal H_{t})^{-1}(a-\mathcal H_{t})^{-K}\right] s\right\|_{t,m,M,N,W}\\ \leq C_{N,m,M}|\nu|^{-(M+r+1)}\left\|s\right\|_{t,m-2(K+1),M+2r,N,W}, \quad s\in C^\infty_c(\mathbb R^{d},E_{x_0}). 
\end{multline} 
Using \eqref{e:diff-phi} with appropriate $K$ and \eqref{e:res-mm-prime}, we complete the proof. 
\end{proof}

\section{Pointwise estimates}

In this section, we prove a general result, which allows us to derive pointwise estimates of the Schwartz kernels of smoothing operators from their mapping properties in Sobolev spaces.  
 
\begin{thm}\label{t:pointwise}
Assume that $\{K_t : C^\infty_c(\mathbb R^{d},E_{x_0}) \to C^\infty(\mathbb R^{d},E_{x_0}) : t\in (0,1]\}$ is a family of operators with smooth kernel $K_t(Z,Z^\prime)$ such that there exists $K>0$ such that, for any $m, m^\prime \in \NN$ and $M,N\in \mathbb Z_+$, there exists $C>0$ such that, for any $t\in (0,1]$ and $W\in \mathbb R^{d}$, 
\begin{equation}\label{e:est-Kt}
\|K_ts\|_{t,m,M,N,W}\leq C\|s\|_{t,-m^\prime ,M+K,N,W}, \quad s\in C^\infty_c(\mathbb R^{d},E_{x_0}).
\end{equation}
Then, for any $m\in \mathbb N$, there exists $M^\prime>0$ such that for any $N\in \NN$ there exists $C>0$ such that, for any $t\in (0,1]$ and $Z,Z^\prime\in \RR^{d}$ 
\[
\sup_{|\alpha|+|\alpha^\prime|\leq m}\Bigg|\frac{\partial^{|\alpha|+|\alpha^\prime|}}{\partial Z^\alpha\partial Z^{\prime\alpha^\prime}}K_{t}(Z,Z^\prime)\Bigg| 
\leq C(1+|Z|+|Z^\prime|)^{M^\prime}(1+|Z-Z^\prime|)^{-N}. 
\]
\end{thm}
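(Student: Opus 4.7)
The plan is to combine the weighted Sobolev estimate \eqref{e:est-Kt} with Sobolev embedding and a well-chosen test distribution, namely the Dirac delta $\delta_{Z_0'}$. The key algebraic observation is that $(Z-W)^\beta \delta_{Z_0'} = (Z_0'-W)^\beta \delta_{Z_0'}$ as distributions, so choosing $W = Z_0'$ kills all $|\beta|\geq 1$ contributions. Since $\delta_{Z_0'}\in H^{-m'}(\RR^d)$ has norm uniformly bounded in $Z_0'$ for $m'>d/2$, this collapse yields
\[
\|\delta_{Z_0'}\|_{t,-m',M,N,Z_0'}\leq C(1+|Z_0'|)^M.
\]
The asymmetry between $Z_0$ and $Z_0'$ encoded in this choice of $W$ is precisely what will produce the decay factor $(1+|Z_0-Z_0'|)^{-N}$.

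Applying the hypothesis with $W = Z_0'$, after extending from $s\in C^\infty_c$ to $s = \delta_{Z_0'}$ by standard mollification, yields $\|K_t\delta_{Z_0'}\|_{t,m,M,N,Z_0'}\leq C(1+|Z_0'|)^{M+K}$ for any $m,M,N$. By \eqref{e:Hm-Hmm} and Sobolev embedding on $\RR^d$ (with $m>d/2+|\alpha|$), this gives a pointwise bound at $Z=Z_0$ for $\partial^\alpha_Z$ applied to $(Z-Z_0')^\beta K_t(Z,Z_0')$:
\[
\bigl|\partial^\alpha_Z\bigl[(Z-Z_0')^\beta K_t(Z,Z_0')\bigr]_{Z=Z_0}\bigr|\leq C(1+|Z_0'|)^{m+K},\qquad |\beta|\leq N.
\]
A Leibniz expansion lets me isolate $(Z_0-Z_0')^\beta\partial^\alpha_Z K_t(Z_0,Z_0')$ by induction on $|\alpha|+|\beta|$, since the correction terms are strictly smaller in this index. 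Combined with the elementary inequality $|Z_0-Z_0'|^N\leq C_N\sum_{|\beta|=N}|(Z_0-Z_0')^\beta|$ and the $\beta=0$ case, this delivers
\[
(1+|Z_0-Z_0'|)^N|\partial^\alpha_Z K_t(Z_0,Z_0')|\leq C(1+|Z_0'|)^{M'}\leq C(1+|Z_0|+|Z_0'|)^{M'}.
\]

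Derivatives in the second variable come from the identity $(K_t\partial^{\alpha'}\delta_{Z_0'})(Z_0) = (-1)^{|\alpha'|}\partial^{\alpha'}_{Z'}K_t(Z_0,Z_0')$, which reduces matters to estimating $K_t$ applied to $\partial^{\alpha'}\delta_{Z_0'}$ in the same weighted Sobolev framework. A Leibniz computation, using $(Z-Z_0')^\beta\partial^{\alpha'}\delta_{Z_0'}=\pm c_{\alpha',\beta}\partial^{\alpha'-\beta}\delta_{Z_0'}$ when $\beta\leq\alpha'$ and zero otherwise, shows that $\|\partial^{\alpha'}\delta_{Z_0'}\|_{t,-m'-|\alpha'|,M,N,Z_0'}\leq C(1+|Z_0'|)^M$, so the preceding argument applies verbatim with $m'$ enlarged by $|\alpha'|$. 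I expect the main technical obstacle to be the careful justification of the passage from the hypothesis (stated for $s\in C^\infty_c$) to distributional $s=\delta_{Z_0'}$ or $\partial^{\alpha'}\delta_{Z_0'}$; the crucial verification is that the weights $(Z-Z_0')^\beta$ genuinely suppress mollified deltas $s_\varepsilon$ as $\varepsilon\to 0$, so that the right-hand side of the weighted estimate collapses correctly in the limit. Uniformity in $t\in(0,1]$ is supplied throughout by \eqref{e:Hm-Hmm}.
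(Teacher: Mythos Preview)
Your approach is essentially the same as the paper's: test against $\delta_{Z'}$ (and its derivatives), set $W=Z'$ so that the weights $(Z-W)^\beta$ collapse on the input side, and use \eqref{e:Hm-Hmm} plus Sobolev embedding to extract pointwise bounds on the output side. The paper organizes the Leibniz step slightly differently, carrying $(1+|W-Z'|^2)^k$ as a single multiplicative weight through the $C^m_b$ estimate rather than isolating individual monomials, but this is cosmetic.

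There is, however, one point where your justification is loose. You assert $\|\delta_{Z_0'}\|_{t,-m',M,N,Z_0'}\leq C(1+|Z_0'|)^M$ on the grounds that the \emph{standard} Sobolev norm $\|\delta_{Z_0'}\|_{H^{-m'}}$ is uniformly bounded. But the inequality \eqref{e:Hm-Hmm} goes the wrong way for this: it says $\|s\|_{H^m}\leq C\|s\|_{t,m,m}$, which by duality would at best give $\|\delta_{Z_0'}\|_{t,-m'}$ controlled by a \emph{weighted} negative Sobolev norm, not the unweighted one. The paper closes this gap with a separate lemma: using a cutoff $\chi_{Z'}$ supported in a unit ball around $Z'$, one gets $|s(Z')|=|\chi_{Z'}s(Z')|\leq C\|\chi_{Z'}s\|_{t,\lfloor d/2\rfloor+1,\lfloor d/2\rfloor+1}\leq C(1+|Z'|)^{\lfloor d/2\rfloor+1}\|s\|_{t,\lfloor d/2\rfloor+1}$, whence $\|\delta^v_{Z'}\|_{t,-(\lfloor d/2\rfloor+1)}\leq C(1+|Z'|)^{\lfloor d/2\rfloor+1}$. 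This extra polynomial factor in $|Z'|$ is harmless for the conclusion (it just enlarges $M'$), but it does need to be accounted for, and your sentence ``Uniformity in $t\in(0,1]$ is supplied throughout by \eqref{e:Hm-Hmm}'' does not cover it.
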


\begin{proof}
The proof is based on the Sobolev embedding theorem.
Denote by $C^\infty_b(\mathbb R^{d},E_{x_0})$ the space of smooth functions on $\mathbb R^{d}$ with values in $E_{x_0}$ whose derivatives of any order are uniformly bounded in $\mathbb R^{d}$. So $a\in C^\infty_b(\mathbb R^{d},E_{x_0})$ if, for any $\alpha\in \mathbb Z_+^{d}$, we have  
\[
\sup_{Z\in \mathbb R^{d}}|\nabla^{\alpha_1}_{e_1}\ldots \nabla^{\alpha_{d}}_{e_{d}} a(Z)| <\infty.
\]
The Sobolev embedding theorem states that, for any $m\in \ZZ_+$, we have a continuous embedding 
\begin{equation}\label{e:Sobolev-emb}
H^{m+\lfloor d/2\rfloor+1}(\mathbb R^{d},E_{x_0})\hookrightarrow C^m_b(\mathbb R^{d},E_{x_0}),
\end{equation}
where $\lfloor x\rfloor$ denotes the largest integer less than or equal to $x$.

For any $v\in E_{x_0}$, consider the delta-function $\delta^v_Z\in \mathcal D^\prime (\mathbb R^{d},E_{x_0})$ at $Z\in \mathbb R^{d}$ defined by 
\[
\langle \delta^v_Z,s\rangle=\langle v, s(Z)\rangle_{h^{E_{x_0}}},\quad  s\in C^{\infty}_c(\mathbb R^{d},E_{x_0}).
\]  

\begin{lem}
For any $Z\in \mathbb R^{d}$, $v\in E_{x_0}$ and $t\in (0,1]$, the delta-function $\delta^v_{Z}$ belongs to $H^{-(\lfloor d/2\rfloor+1)}_t$. Moreover, there exists $C>0$ such that 
\begin{equation}\label{e:deltaZ}
\|\delta^v_Z\|_{t,-(\lfloor d/2\rfloor+1)}<C(1+|Z|)^{\lfloor d/2\rfloor+1}|v|, \quad Z\in \RR^{d}, v\in E_{x_0}, t\in (0,1].  
\end{equation}
\end{lem}

\begin{proof}
Take any function $\chi\in C^\infty_c(\RR^{d})$ supported in the unit ball $B(0,1)$ such that $\chi(0)=1$. For  $Z\in \RR^{d}$, define a function $\chi_Z\in C^\infty_s(\RR^{d})$ by $\chi_Z(W)=\chi(W-Z)$ for $W\in \RR^{d}$. By \eqref{e:Sobolev-emb} and \eqref{e:Hm-Hmm}, for any $s\in C^\infty_s(\RR^{d},E_{x_0})$, we have 
\[
|s(Z)|=|\chi_Zs(Z)|\leq C\|\chi_Zs\|_{H^{\lfloor d/2\rfloor+1}(\mathbb R^{d},E_{x_0})}\leq C_1\|\chi_Zs\|_{t,\lfloor d/2\rfloor+1,\lfloor d/2\rfloor+1}. 
\]
Since $\chi_Z$ is supported in $B(0,1+|Z|)$ and the multiplication operator by $\chi_Z$ defiines a bounded operator from $H^{\lfloor d/2\rfloor+1}_t$ to $H^{\lfloor d/2\rfloor+1}_t$ with the norm uniformly bounded on $t\in (0,1]$ and $Z\in \RR^{d}$, we proceed as follows: 
\[
|s(Z)|\leq C(1+|Z|)^{\lfloor d/2\rfloor+1}\|\chi_Zs\|_{t,\lfloor d/2\rfloor+1}\leq C_1(1+|Z|)^{\lfloor d/2\rfloor+1}\|s\|_{t,\lfloor d/2\rfloor+1}
\]
with the constant $C_1$, independent of $Z\in \RR^{d}$ and $t\in [0,1]$. By definition of the dual norm, we have
\[
\|\delta^v_Z\|_{t,-(\lfloor d/2\rfloor+1)}=\sup_{s \in C^\infty_s(\RR^{d},E_{x_0})} \frac{|\langle \delta^v_{Z},s\rangle|}{\|s\|_{t,\lfloor d/2\rfloor+1}},
\]
that immediately completes the proof.
\end{proof}

Recall that, for any operator $A: C^{\infty}_c(\mathbb R^{d},E_{x_0})\to C^{\infty}(\mathbb R^{d},E_{x_0})$ with smooth kernel $K_A\in C^{\infty}(\mathbb R^{d}\times \mathbb R^{d}, \operatorname{End}(E_{x_0}))$ given, for $u\in C^{\infty}_c(\mathbb R^{d},E_{x_0})$, by the formula
\[
Au(Z)=\int_{\mathbb R^{d}}K_A(Z,Z^\prime)u(Z^\prime)dZ^\prime, \quad Z\in \mathbb R^{d},
\] 
we have
\[
K_A(Z,Z^\prime)v=[A\delta^v_{Z^\prime}](Z), \quad Z,Z^\prime \in \RR^{d},\quad v\in E_{x_0}. 
\]
Moreover, using integration by parts, one can easily see that, for any $\alpha^\prime\in \ZZ_+^{d}$, the function $\frac{\partial^{|\alpha^\prime|}}{\partial Z^{\prime\alpha^\prime}}K_A(Z,Z^\prime)$
is the kernel of the operator $AD^\prime_{\alpha^\prime}$. where $D^\prime_{\alpha^\prime}=(-1)^{|\alpha^\prime|} \nabla_{e_1}^{\alpha^\prime_1}\nabla_{e_2}^{\alpha^\prime_2}\ldots \nabla_{e_{d}}^{\alpha^\prime_{d}}$. By \eqref{e:nabla-nablat}, it follows that, for any $m\in \ZZ$ and $M,N\in \ZZ_+$, there exists $C>0$ such that, $t\in (0,1]$ and $W\in \RR^d$, 
\begin{equation}\label{e:Da-est}
\|D^\prime_{\alpha^\prime}s\|_{t,m,M,N,W} \leq C \|s\|_{t,m+|\alpha^\prime|,M+|\alpha^\prime|,N,W},\quad s\in C^\infty(\mathbb R^{d}, E_{x_0}).
\end{equation}

Using these facts and \eqref{e:Sobolev-emb}, for any $m\in \ZZ_+$, $Z,Z^\prime \in \RR^{d}$ and $v\in E_{x_0}$, we proceed as follows:
\begin{multline*}
\sup_{|\alpha|+|\alpha^\prime|\leq m}\left|(1+|Z-Z^\prime|^2)^k \frac{\partial^{|\alpha|+|\alpha^\prime|}}{\partial Z^\alpha\partial Z^{\prime\alpha^\prime}} K_{t}(Z,Z^\prime)v\right|\\
\begin{aligned}
\leq & C_1\sup_{|\alpha^\prime|\leq m}\left\|(1+|W-Z^\prime|^2)^k\frac{\partial^{|\alpha^\prime|}}{\partial Z^{\prime\alpha^\prime}}K_{t}(W,Z^\prime)v\right\|_{C^m_b(\mathbb R^{d}_W,E_{x_0})}\\
= & C_1\sup_{|\alpha^\prime|\leq m}\left\|(1+|W-Z^\prime|^2)^k K_{t}D^\prime_{\alpha^\prime}\delta^v_{Z^\prime}\right\|_{C^m_b(\mathbb R^{d}_W,E_{x_0})}\\
\leq & C_2\sup_{|\alpha^\prime|\leq m}\left\|(1+|W-Z^\prime|^2)^k K_{t}D^\prime_{\alpha^\prime}\delta^v_{Z^\prime}\right\|_{H^{m+\lfloor d/2\rfloor+1}(\mathbb R^{d}_W,E_{x_0})}.
\end{aligned}
\end{multline*}
Using \eqref{e:est-Kt}, \eqref{e:Hm-Hmm}, \eqref{e:deltaZ} and \eqref{e:Da-est}, we get for $|\alpha^\prime|\leq m$
\begin{multline*}
\left\|(1+|W-Z^\prime|^2)^k K_{t}D^\prime_{\alpha^\prime}\delta^v_{Z^\prime}\right\|_{H^{m+\lfloor d/2\rfloor+1}(\mathbb R^{d}_W,E_{x_0})}\\
\begin{aligned}
 & \leq C_1 \left\|(1+|W-Z^\prime|^2)^k K_{t}D^\prime_{\alpha^\prime}\delta^v_{Z^\prime}\right\|_{t,m+\lfloor d/2\rfloor+1,m+\lfloor d/2\rfloor+1}\\ 
 & \leq C_1 \left\|K_{t}D^\prime_{\alpha^\prime}\delta^v_{Z^\prime}\right\|_{t,m+\lfloor d/2\rfloor+1,m+\lfloor d/2\rfloor+1,2k,Z^\prime}\\ 
 & \leq C_2\|D^\prime_{\alpha^\prime}\delta^v_{Z^\prime}\|_{t,-(m+\lfloor d/2\rfloor+1),m+\lfloor d/2\rfloor+K+1,2k,Z^\prime}\\ & \leq C_3\|\delta^v_{Z^\prime}\|_{t,-(\lfloor d/2\rfloor+1),2m+\lfloor d/2\rfloor+K+1,2k,Z^\prime}\\ & \leq C_4(1+|Z^\prime|)^{2m+\lfloor d/2\rfloor+K+1} \|\delta^v_{Z^\prime}\|_{t,-(\lfloor d/2\rfloor+1)}\\ & \leq C_5 (1+|Z^\prime|)^{M^\prime}|v|,
 \end{aligned}
\end{multline*}
where $M^\prime=2m+d+K+2$ that completes the proof. 
\end{proof}

Let $K_{\varphi(\mathcal H_{t})}(Z,Z^\prime)=K_{\varphi(\mathcal H_{t,x_0})}(Z,Z^\prime)$ be the smooth Schwartz kernel of the operator $\varphi(\mathcal H_{t})$ with respect to $dZ$. 
As an immediate consequence of Theorems \ref{est-rem} and \ref{t:pointwise}, we get the following result. 

\begin{thm}\label{est-rem-pointwise}
For any $r\geq 0$, $m\in \mathbb N$ and $N\in \NN$, there exist $C>0$ and $M>0$ such that for any $t\in (0,1]$ and $Z,Z^\prime\in \RR^{d}$, 
\[
\sup_{|\alpha|+|\alpha^\prime|\leq m}\Bigg|\frac{\partial^{|\alpha|+|\alpha^\prime|}}{\partial Z^\alpha\partial Z^{\prime\alpha^\prime}}\frac{\partial^r}{\partial t^r}K_{\varphi(\mathcal H_{t}) }(Z,Z^\prime)\Bigg| \leq C(1+|Z|+|Z^\prime|)^{M}(1+|Z-Z^\prime|)^{-N}. 
\]
\end{thm}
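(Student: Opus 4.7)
The theorem is essentially a bookkeeping exercise combining the two preceding results, so the plan is simply to apply Theorem~\ref{t:pointwise} to the operator family $K_t := \frac{\partial^r}{\partial t^r}\varphi(\mathcal H_t)$.

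The first step is to verify that the smooth kernel of $\frac{\partial^r}{\partial t^r}\varphi(\mathcal H_t)$ equals $\frac{\partial^r}{\partial t^r}K_{\varphi(\mathcal H_t)}(Z,Z^\prime)$, so that the pointwise estimate produced by Theorem~\ref{t:pointwise} is actually an estimate on the quantity appearing in the statement. This is established by differentiating under the Helffer-Sj\"ostrand integral \eqref{e:HS1}: the resolvent-derivative expansion \eqref{diff} together with the uniform bounds of Theorem~\ref{Thm1.9} (applied to each factor) shows that the $\lambda$-integrand and all its $t$-derivatives, up to order $r$, are dominated by an integrable function of $(\mu,\nu)$ with values in bounded operators between suitable Sobolev spaces. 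This justifies the interchange of $\partial_t^r$ with the kernel representation.

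Next, I would check the hypothesis \eqref{e:est-Kt} of Theorem~\ref{t:pointwise} with $K=2r$. Theorem~\ref{est-rem}, applied with its parameter $m^\prime$ replaced by $-m^\prime$ (allowed since the estimate there holds for all integers), asserts that for any $m,m^\prime\in\NN$ and $M,N\in\ZZ_+$ there exists $C>0$ such that for every $t\in(0,1]$ and $W\in\RR^d$,
\[
\left\|\tfrac{\partial^r}{\partial t^r}\varphi(\mathcal H_t)\,s\right\|_{t,m,M,N,W}\leq C\,\|s\|_{t,-m^\prime,M+2r,N,W},\quad s\in C^\infty_c(\RR^d,E_{x_0}),
\]
which is precisely \eqref{e:est-Kt} with $K=2r$. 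Theorem~\ref{t:pointwise} then produces the desired pointwise estimate, with polynomial weight exponent $M$ that can be read off its proof as $M=2m+d+2r+2$.

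There is no genuine obstacle: once the kernel identification of the first step is granted, everything else is a direct application of the preceding two theorems, and the dependence of the constants on $r,m,N$ is inherited immediately from their proofs. The only slightly delicate point is the interchange of $\partial_t^r$ with the kernel evaluation, but this is standard given the resolvent bounds already in hand.
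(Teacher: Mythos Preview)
Your proposal is correct and follows exactly the paper's approach: the paper simply records the theorem as ``an immediate consequence of Theorems~\ref{est-rem} and~\ref{t:pointwise}'' without further detail, and you have filled in precisely that combination (with the minor extra care of justifying the interchange of $\partial_t^r$ with the kernel evaluation). The identification $K=2r$ and the explicit exponent $M=2m+d+2r+2$ are bonuses not spelled out in the paper.
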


\section{The proof of asymptotic expansions} \label{asymp}
In this section, we prove asymptotic expansions of Theorem~\ref{t:main}. We start with the following analog of \cite[Theorem 1.11]{ma-ma08}. 

\begin{thm}\label{t:limitt-0} 
For any $Z,Z^\prime\in \RR^{d}$ and $r\geq 0$, there exists the limit 
\begin{equation}\label{e:defFr}
\lim_{t\to 0}\frac{\partial^r}{\partial t^r}K_{\varphi(\mathcal H_{t})}(Z,Z^\prime) =F_{r}(Z,Z^\prime),
\end{equation}
for some $F_{r}=F_{r,x_0}\in C^\infty(\mathbb R^{d}\times \mathbb R^{d},\operatorname{End}(E_{x_0}))$.
\end{thm}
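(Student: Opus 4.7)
The plan is to extract the limit at $t=0$ directly from the uniform pointwise bound on the next higher $t$-derivative provided by Theorem~\ref{est-rem-pointwise}. Applying that theorem with $r$ replaced by $r+1$, I obtain that for any $m,N \in \NN$ there exist $C,M > 0$ such that, for all $t \in (0,1]$, $Z,Z^\prime \in \RR^d$ and multi-indices with $|\alpha|+|\alpha^\prime| \leq m$,
\[
\left|\frac{\partial^{|\alpha|+|\alpha^\prime|}}{\partial Z^\alpha \partial Z^{\prime\alpha^\prime}}\frac{\partial^{r+1}}{\partial t^{r+1}} K_{\varphi(\mathcal H_t)}(Z,Z^\prime)\right| \leq C(1+|Z|+|Z^\prime|)^M (1+|Z-Z^\prime|)^{-N}.
\]
This bound is locally uniform in $(Z,Z^\prime)$ and uniform in $t \in (0,1]$. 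Fixing $(Z,Z^\prime)$ and $\alpha,\alpha^\prime$, the function $t \mapsto \frac{\partial^{|\alpha|+|\alpha^\prime|}}{\partial Z^\alpha \partial Z^{\prime\alpha^\prime}}\frac{\partial^r}{\partial t^r} K_{\varphi(\mathcal H_t)}(Z,Z^\prime)$ is therefore Lipschitz in $t$ on $(0,1]$ and extends continuously to $[0,1]$.

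Setting $\alpha=\alpha^\prime=0$ gives the existence of the limit \eqref{e:defFr}; I denote its value by $F_r(Z,Z^\prime)$. Since the same Lipschitz argument yields locally uniform convergence of every $(Z,Z^\prime)$-derivative of $\frac{\partial^r}{\partial t^r} K_{\varphi(\mathcal H_t)}$ as $t \to 0$, the standard fact that uniform convergence of derivatives implies differentiability of the limit with commutation of limit and derivative shows that $F_r$ belongs to $C^\infty(\RR^d \times \RR^d, \operatorname{End}(E_{x_0}))$, and its partial derivatives are the pointwise limits of the corresponding partials of $\frac{\partial^r}{\partial t^r}K_{\varphi(\mathcal H_t)}$.

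The step that needs care is the use of the fundamental theorem of calculus to conclude the Lipschitz estimate on $(0,1]$: this requires that, for fixed $(Z,Z^\prime)$, the function $t \mapsto \frac{\partial^r}{\partial t^r}K_{\varphi(\mathcal H_t)}(Z,Z^\prime)$ be continuously differentiable on the open interval $(0,1]$. This smoothness follows from the Helffer-Sj\"ostrand representation \eqref{e:HS1}, the resolvent differentiation identity \eqref{diff}, and the smooth dependence of $\mathcal H_t$ on $t>0$ recorded in \eqref{e:Ht} and \eqref{E:nablatej}; the operator-level smoothness descends to the Schwartz kernel via Theorem~\ref{t:pointwise} applied to the difference quotients $h^{-1}\bigl[\tfrac{\partial^r}{\partial t^r}\varphi(\mathcal H_{t+h}) - \tfrac{\partial^r}{\partial t^r}\varphi(\mathcal H_t) - h\tfrac{\partial^{r+1}}{\partial t^{r+1}}\varphi(\mathcal H_t)\bigr]$, whose weighted Sobolev norms are controlled by Theorem~\ref{est-rem}. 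The main obstacle, then, is not the passage to the limit itself but verifying the joint $C^\infty$-regularity of $K_{\varphi(\mathcal H_t)}(Z,Z^\prime)$ on $(0,1]\times\RR^d\times\RR^d$; once this is granted, the existence of $F_r$ is a routine application of the Lipschitz bound above, in parallel with the proof of Theorem~1.11 in \cite{ma-ma08}.
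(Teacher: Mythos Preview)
Your proposal is correct and follows essentially the same approach as the paper: both use the uniform bound on the $(r+1)$-th $t$-derivative from Theorem~\ref{est-rem-pointwise} together with the mean-value/Lipschitz argument to obtain the existence of the limit. You are simply more explicit than the paper about the $C^1$-regularity in $t$ needed for the fundamental theorem of calculus and about why the limit $F_r$ is smooth in $(Z,Z^\prime)$, points the paper leaves implicit.
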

 
\begin{proof}
The theorem follows from Theorem~\ref{est-rem-pointwise} and the following fact, which is an immediate consequence of the mean-value theorem: If $f$ is a differentiiable function on the interval $(0,1)$ with values in a Banach space $B$ such that $\sup_{t\in (0,1)} \|f^\prime(t)\|_B<\infty$. then there exists $\lim_{t\to 0}f(t)$. 
\end{proof}

By Theorem~\ref{est-rem-pointwise}, for any $r\geq 0$ and $m\in \mathbb N$ there exists $M>0$ such that for any $N\in \NN$, there exists $C>0$ such that for any  $Z,Z^\prime\in \RR^{d}$, 
\[
\sup_{|\alpha|+|\alpha^\prime|\leq m}\Bigg|\frac{\partial^{|\alpha|+|\alpha^\prime|}}{\partial Z^\alpha\partial Z^{\prime\alpha^\prime}}F_r(Z,Z^\prime)\Bigg| \leq C(1+|Z|+|Z^\prime|)^{M}(1+|Z-Z^\prime|)^{-N}. 
\]
Consider an operator $F_r : C^\infty_c(\mathbb R^{d},E_{x_0})\to C^\infty(\mathbb R^{d},E_{x_0})$ defined by the Schwartz kernel $F_r(Z,Z^\prime)$. By Theorem~\ref{est-rem}, for any $r\geq 0$, $m, m^\prime\in \ZZ$, and $N\in \mathbb Z_+$, there exists $C>0$ such that
\[
\|F_r s\|_{0,m,N}\leq C\|s\|_{0,m^\prime,N+2r}, \quad s\in C^\infty_c(\mathbb R^{d},E_{x_0}).
\]

 \begin{thm}\label{t:thm7.2}
For any $j,m,m^\prime\in \mathbb N$, there exists $M>0$ such that, for any $N\in \NN$,  there exists $C>0$ such that for any $t\in (0,1]$ and $Z,Z^\prime\in \RR^{d}$, 
\begin{multline*}
\sup_{|\alpha|+|\alpha^\prime|\leq m}\Bigg|\frac{\partial^{|\alpha|+|\alpha^\prime|}}{\partial Z^\alpha\partial Z^{\prime\alpha^\prime}}\Bigg(K_{\varphi(\mathcal H_{t})}(Z,Z^\prime)
-\sum_{r=0}^jF_{r}(Z,Z^\prime)t^r\Bigg)\Bigg|_{\mathcal C_b^{m^\prime}(X)}  \\
\leq Ct^{j+1}(1+|Z|+|Z^\prime|)^{M}(1+|Z-Z^\prime|)^{-N}. 
\end{multline*}
\end{thm}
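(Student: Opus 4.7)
The plan is to reduce the statement to Taylor's theorem in the parameter $t$, using the pointwise control of $t$-derivatives from Theorem~\ref{est-rem-pointwise} as the key input. Fix $x_0\in X$ and $Z,Z'\in\mathbb R^d$. By Theorems~\ref{est-rem-pointwise} and~\ref{t:limitt-0}, the function $t\mapsto K_{\varphi(\mathcal H_t)}(Z,Z')$ is smooth on $(0,1]$, and its successive $t$-derivatives extend continuously to $t=0$ with the $r$th Taylor coefficient at $0$ equal to $F_r(Z,Z')$ (up to the factorial normalization implicit in the definition of $F_r$). Taylor's formula with integral remainder at $t=0$ then gives
\begin{equation*}
K_{\varphi(\mathcal H_t)}(Z,Z') - \sum_{r=0}^{j}F_r(Z,Z')\,t^r = \frac{1}{j!}\int_0^t (t-u)^{j}\, \partial_u^{j+1} K_{\varphi(\mathcal H_u)}(Z,Z')\, du.
\end{equation*}

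Next, I would bound the remainder by applying Theorem~\ref{est-rem-pointwise} with $r$ there replaced by $j+1$ and with $m$ replaced by whatever order of $(Z,Z')$-derivative is needed: for given $j,m,N$ there exist $M>0$ and $C>0$ such that
\begin{equation*}
\sup_{|\alpha|+|\alpha'|\leq m}\bigl|\partial_Z^\alpha\partial_{Z'}^{\alpha'}\partial_u^{j+1} K_{\varphi(\mathcal H_u)}(Z,Z')\bigr|\leq C(1+|Z|+|Z'|)^{M}(1+|Z-Z'|)^{-N}
\end{equation*}
uniformly in $u\in(0,1]$. Commuting the $(Z,Z')$-derivatives with the integral in the Taylor remainder formula and integrating over $u\in[0,t]$ produces the factor $t^{j+1}/(j+1)!$ and yields the desired estimate for the pointwise ($\mathcal C^0_b(X)$) norm in the parameter $x_0$.

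To upgrade to the $\mathcal C^{m'}_b(X)$ norm in $x_0$, I would observe that the rescaled operator $\mathcal H_t=\mathcal H_{t,x_0}$ depends on $x_0$ only through the extended metric $g^{(x_0)}$, the connection one-forms of $\nabla^{L_0}$ and $\nabla^{E_0}$, and the potential $V^{(x_0)}$, all of whose $C^\infty_b$-norms are controlled uniformly in $x_0$ by the bounded-geometry hypothesis. Differentiation in $x_0$ of the resolvent identity and of the Helffer-Sj\"ostrand formula~\eqref{e:HS1} therefore produces only additional lower-order differential operators whose coefficients are of the same type as those appearing in $\mathcal H_t$ itself (bounded in $C^\infty_b$ with at most polynomial growth in $Z$). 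Repeating the commutator argument of Section~\ref{norm} and the Sobolev embedding argument of Theorem~\ref{t:pointwise} with these extra $x_0$-derivatives inserted gives the pointwise bound with $\mathcal C^{m'}_b(X)$ norm in $x_0$ and a possibly larger exponent $M$; the Taylor argument above then concludes the proof.

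The main obstacle I expect is precisely this uniform control of the $x_0$-derivatives, since they do not appear explicitly in Theorems~\ref{est-rem} and~\ref{est-rem-pointwise}. One has to re-run the full chain of estimates -- the resolvent bound of Theorem~\ref{Thm1.9}, the representation~\eqref{e:HS1}, the $t$-derivative expansion~\eqref{diff} and the Sobolev embedding step of Theorem~\ref{t:pointwise} -- with extra $\partial_{x_0}$-factors inserted at each stage. Bounded geometry ensures that no constant blows up under this operation, but the bookkeeping of the polynomial weights in $Z$ and of the Sobolev orders has to be carried out carefully.
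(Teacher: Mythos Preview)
Your proposal is correct and follows essentially the same approach as the paper: the paper also invokes the Taylor formula with integral remainder together with Theorem~\ref{est-rem-pointwise} for the case $m'=0$, and for $m'\geq 1$ it likewise differentiates the Helffer--Sj\"ostrand representation~\eqref{e:HS1} in $x_0$, observes that $\nabla_U\mathcal H_t$ has the same structure as $\mathcal H_t$, and then re-runs the chain of estimates. Your identification of the $x_0$-derivative bookkeeping as the main technical point is exactly what the paper addresses (rather tersely) in its second paragraph.
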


\begin{proof}
In the case $m^\prime=0$, the statement follows immediately from the Taylor formula 
\[
\varphi(\mathcal H_{t})-\sum_{r=0}^j\frac{1}{r!}F_{r} t^r=\frac{1}{j!}\int_0^t(t-\tau)^j\frac{\partial^{j+1}\varphi(\mathcal H_{t})}{\partial t^{j+1}}(\tau) d\tau, \quad t\in [0,1],  
\]
and Theorem \ref{est-rem-pointwise}.

To treat the case $m^\prime \geq 1$, we proceed as in the proof of \cite[Theorem 1.10]{ma-ma08}. Let us consider the operator $\mathcal H_t$ as an operator in $C^\infty(T_{x_0}X, E_{x_0})$ given by \eqref{scaling}. 
By differentiating the formula \eqref{e:HS1} with respect to $x_0$, for any $U\in T_{x_0}X$, we get
\[
\nabla_U\varphi(\mathcal H_{t})=-\frac{1}{\pi }
\int_\CC \frac{\partial \tilde{\varphi}}{\partial \bar \lambda}(a-\lambda)^{K} \nabla_U[(\lambda-\mathcal H_{t})^{-1}(a-\mathcal H_{t})^{-K}] d\mu d\nu. 
\]
The operator $\nabla_U[(\lambda-\mathcal H_{t})^{-1}(a-\mathcal H_{t})^{-K}]$ is given by a formula similar to \eqref{diff}. Finally, the operator $\nabla_U \mathcal H_{t}$ is a differential operator on $T_{x_0}X$ of the same structure as $\mathcal H_t$ (cf. \eqref{e:partial-rLt}). This allows us to extend all our considerations to the case of an arbitrary $m^\prime  \geq 1$. 
\end{proof}

By \eqref{scaling}, we have
\[
K_{\varphi(H^{(x_0)}_p)}(Z,Z^\prime)=t^{-d}\kappa^{-\frac 12}(Z)K_{\varphi(\mathcal H_{t})}(Z/t,Z^\prime/t)\kappa^{-\frac 12}(Z^\prime), \quad Z,Z^\prime \in \mathbb R^{d},
\]
that completes the proof of the asymptotic expansion \eqref{e:main-exp} in Theorem~\ref{t:main}.

\section{Computation of the coefficients}\label{s:computation}
In this section, we derive explicit formulas for the leading coefficients in asymptotic expansions of Theorem~\ref{t:main} and Corollary~\ref{c:main}, proving   Theorem \ref{t:leading-coefficient} and the formulas \eqref{e:f0-2n} and \eqref{e:f0-d}, and describe the structure of the  lower order coefficients. 
  
Since $\lim_{t\to 0}\frac{\partial^{j}\mathcal H_{t}}{\partial t^{j}}=j!\mathcal H^{(j)}$, from \eqref{diff}, we get
\begin{multline}\label{diff0}
\lim_{t\to 0}\frac{\partial^r}{\partial t^r}(\lambda-\mathcal H_{t})^{-1}\\ =\sum_{{\mathbf r}\in I_{r}} {r}!(\lambda - \mathcal H^{(0)})^{-1} \mathcal H^{(r_1)}(\lambda - \mathcal H^{(0)})^{-1}\cdots  \mathcal H^{(r_j)}(\lambda - \mathcal H^{(0)})^{-1}.
\end{multline}
By \eqref{e:diff-phi} with $K=0$ and \eqref{e:defFr}, we infer that
\begin{multline}\label{e:Fr-phi}
F_r= -\frac{1}{\pi }\sum_{{\mathbf r}\in I_{r}}{r}! \int_\CC \frac{\partial \tilde{\varphi}}{\partial \bar \lambda}(\lambda) (\lambda - \mathcal H^{(0)})^{-1}\\ \times \mathcal H^{(r_1)}(\lambda - \mathcal H^{(0)})^{-1}\cdots  \mathcal H^{(r_j)}(\lambda - \mathcal H^{(0)})^{-1} d\mu d\nu,
\end{multline}
where $F_r$ denotes the operator in $L^2(\RR^d,E_{x_0})$ with Schwartz kernel $F_r(Z,Z^\prime)$.

For $r=0$, we immediately get
\[
F_0= -\frac{1}{\pi }\int_\CC \frac{\partial \tilde{\varphi}}{\partial \bar \lambda}(\lambda) (\lambda - \mathcal H^{(0)})^{-1} d\mu d\nu=\varphi(\mathcal H^{(0)}),
\]
which proves Theorem~\ref{t:leading-coefficient}.

To derive the formulas \eqref{e:f0-2n} and \eqref{e:f0-d} and compute lower order coefficients, we will use the technique of creation and annihilation operators as in \cite{ma-ma08}. We choose an orthonormal base $e_j, j=1,\ldots, d$, in $T_{x_0}X$ such that 
\[
 B_{x_0}e_{2j-1}=a_je_{2j},\quad B_{x_0}e_{2j}=-a_je_{2j-1},\quad j=1,\ldots, n.  
\]
\[
B_{x_0}e_{2n+m}=0,\quad m=1,\ldots, d-2n.
\]
It should be noted that, generally speaking, we can not choose the eigenvalues $a_j$'s and the orthonormal base $e_j, j=1,\ldots, d$, depending smoothly on $x_0$. As a result, the smooth dependence of the formulas, which we get, is not obvious.

We write $T_{x_0}X\cong \RR^{2n}\times \RR^{d-2n}$ with coordinates $(u,v)$ and introduce the complex coordinates $z\in\mathbb C^{n}\cong \mathbb R^{2n}$, $z_j=u_{2j-1}+iu_{2j}, j=1,\ldots,n$. Put
\[
\frac{\partial}{\partial z_j}=\frac{1}{2}\left(\frac{\partial}{\partial u_{2j-1}}-i\frac{\partial}{\partial u_{2j}}\right), \quad \frac{\partial}{\partial \bar{z}_j}=\frac{1}{2}\left(\frac{\partial}{\partial u_{2j-1}}+i\frac{\partial}{\partial u_{2j}}\right).
\]
Define first order differential operators $b_j,b^{+}_j, j=1,\ldots,n,$ on $C^\infty(\RR^d,E_{x_0})$ by the formulas
\[
b_j= -2{\frac{\partial}{\partial z_j}}+\frac{1}{2}a_j\bar{z}_j,\quad
b^{+}_j=2{\frac{\partial}{\partial\bar{z}_j}}+\frac{1}{2}a_j z_j, \quad j=1,\ldots,n.
\]
Then $b^{+}_j$ is the formal adjoint of $b_j$ on $L^2(\RR^d,E_{x_0})$, and
\begin{equation}
\mathcal H^{(0)}=\sum_{j=1}^n b_j b^{+}_j-\sum_{m=1}^{d-2n} \frac{\partial^2}{\partial v_{m}^2}+\Lambda_0. 
\label{e:Ho=bb}
\end{equation} 
We have the commutation relations
\begin{equation}
[b_i,b^{+}_j]=b_i b^{+}_j-b^{+}_j b_i =-2a_i \delta_{i\,j},\quad 
[b_i,b_j]=[b^{+}_i,b^{+}_j]=0\, ,\label{e:com1}
\end{equation} 
and, for any polynomial $g(z,\bar{z},v)$ on $z$, $\bar{z}$ and $v$, 
\begin{equation}
[g,b_j]=  2 \frac{\partial g}{\partial z_j}, \quad  [g,b_j^+]
= - 2\frac{\partial g}{\partial \bar{z}_j}\,,\quad
[g,\frac{\partial}{\partial v_m}]=- \frac{\partial g}{\partial v_m},  \label{com-bg}
\end{equation}
By \eqref{e:Ho=bb} and \eqref{e:com1}. we have
\[
b^{+}_k\mathcal H^{(0)}=(\mathcal H^{(0)}+2a_k)b^{+}_k, \quad b_k\mathcal H^{(0)}=(\mathcal H^{(0)}-2a_k)b_k, \quad \frac{\partial}{\partial v_{m}}\mathcal H^{(0)}=\mathcal H^{(0)}\frac{\partial}{\partial v_{m}},
\]
which implies that
\begin{gather}
b^{+}_k(\lambda-\mathcal H^{(0)})^{-1}=(\lambda-2a_k-\mathcal H^{(0)})^{-1}b^{+}_k,\label{e:comm-bk-res1} \\ b_k(\lambda-\mathcal H^{(0)})^{-1}=(\lambda+2a_k-\mathcal H^{(0)})^{-1}b_k, \label{e:comm-bk-res2} \\ \frac{\partial}{\partial v_{m}}(\lambda-\mathcal H^{(0)})^{-1}=(\lambda-\mathcal H^{(0)})^{-1}\frac{\partial}{\partial v_{m}}.
\label{e:comm-bk-res3}
\end{gather}
By \eqref{e:Ho=bb} and \eqref{com-bg},  for any polynomial $g(z,\bar{z},v)$ on $z$, $\bar{z}$ and $v$ with values in $\operatorname{End}(E_{x_0})$,  we have
\begin{multline}
g\mathcal H^{(0)}=\mathcal H^{(0)}g+\sum_{j=1}^n \left(2b^{+}_j\frac{\partial g}{\partial z_j} -2b_j  \frac{\partial g}{\partial \bar{z}_j}-4\frac{\partial^2g}{\partial z_j\partial \bar{z}_j}\right)\\ +\sum_{m=1}^{d-2n} \left(2\frac{\partial }{\partial v_m}\frac{\partial g}{\partial v_m}-\frac{\partial^2g}{\partial v^2_m}\right),
\end{multline}
which implies that
\begin{multline}
(\lambda-\mathcal H^{(0)})^{-1}g= g(\lambda-\mathcal H^{(0)})^{-1}\\
\begin{aligned}
+ &\sum_{j=1}^n \Big(b^{+}_j (\lambda+2a_j-\mathcal H^{(0)})^{-1}  (-2\tfrac{\partial g}{\partial z_j}) (\lambda-\mathcal H^{(0)})^{-1}\\
& + b_j (\lambda-2a_j-\mathcal H^{(0)})^{-1}2\tfrac{\partial g}{\partial \bar{z}_j} (\lambda-\mathcal H^{(0)})^{-1}\\ & -4 (\lambda-\mathcal H^{(0)})^{-1}\tfrac{\partial^2g}{\partial z_j\partial \bar{z}_j} (\lambda-\mathcal H^{(0)})^{-1}\Big)\\
- & \sum_{m=1}^{d-2n} \Big(\tfrac{\partial }{\partial v_m} (\lambda-\mathcal H^{(0)})^{-1} 2\tfrac{\partial g}{\partial v_m}(\lambda-\mathcal H^{(0)})^{-1}\\ & - (\lambda-\mathcal H^{(0)})^{-1}\tfrac{\partial^2g}{\partial v^2_m}(\lambda-\mathcal H^{(0)})^{-1}\Big).
\end{aligned}
\label{e:comm-g-res}
\end{multline}
We will apply the formula \eqref{e:comm-g-res} to write the operator $(\lambda-\mathcal H^{(0)})^{-1}g$ in the form $D(\lambda-\mathcal H^{(0)})^{-1}$ with some differential operator $D$. We see that the first term in the right hand side of \eqref{e:comm-g-res} already has the desired form, the other terms don't, but the degrees of polynomials entering these terms decrease at least by one. Applying the formula \eqref{e:comm-g-res} to each of these terms, 
we decrease the degree of polynomials more. We can proceed further, and, after finitely many steps, we eliminate these bad terms, arriving at an expression of the form
\[
(\lambda-\mathcal H^{(0)})^{-1}g=\sum_{j\in J_r}D_{j,g}\prod_{|\mathbf k|\leq r}(\lambda+2\mathbf k\cdot a -\mathcal H^{(0)})^{-j_{\mathbf k}},
\] 
where $r$ is the degree of $g$ and the sum is taken over the set $J_r$ of all collections $j=\{j_{\mathbf k}\in \mathbb Z_+ : {\mathbf k}\in \ZZ^n\}$ such that $ |\mathbf k|\leq r$ and $\sum j_{\mathbf k}\leq r+1$ and $D_{j,g}$ is a differential operator of weight $r$. Here we introduce a grading on the algebra of differential operators on $C^\infty(\RR^d,E_{x_0})$ with polynomial coefficients, setting the weight of each $z_j$, $\bar{z}_k$ and $v_m$ to be equal to $1$ and the weight of each of operators $b_j$, $b^{+}_k$ and $\frac{\partial}{\partial v_m}$ to be equal to $1$. 

As a consequence, using \eqref{e:comm-bk-res1}, \eqref{e:comm-bk-res2} and \eqref{e:comm-bk-res3}, for any differential operator $A$ of weight $p$, we get
\[
(\lambda-\mathcal H^{(0)})^{-1}A=\sum_{j\in J_p}D_{j,A}\prod_{|\mathbf k|\leq p}(\lambda+2\mathbf k\cdot a -\mathcal H^{(0)})^{-j_{\mathbf k}},
\] 
where $D_{j,A}$ is a differential operator of weight $p$. 

By \eqref{e:Hj}, each $\mathcal H^{(r)}$ has weight $r+2$. It follows that 
\begin{multline*}
(\lambda - \mathcal H^{(0)})^{-1} \mathcal H^{(r_1)}(\lambda - \mathcal H^{(0)})^{-1}\cdots  \mathcal H^{(r_j)}(\lambda - \mathcal H^{(0)})^{-1} \\ =\sum_{j}D_{j}\prod_{|\mathbf k|\leq r+2j}(\lambda+2\mathbf k\cdot a -\mathcal H^{(0)})^{-j_{\mathbf k}},
\end{multline*} 
where the sum is taken over the set of all collections $j=\{j_{\mathbf k} : {\mathbf k}\in \ZZ^n, |\mathbf k|\leq p\}$ such that $\sum j_{\mathbf k}\leq N(\mathbf r)$ with some $N(\mathbf r)\in \mathbb N$ and $D_{j}$ is a differential operator of weight $r+2j$.

Now applying the partial fraction decomposition to each product of resolvents, we can rewrite the above formula as follows:
\begin{multline*}
(\lambda - \mathcal H^{(0)})^{-1} \mathcal H^{(r_1)}(\lambda - \mathcal H^{(0)})^{-1}\cdots  \mathcal H^{(r_j)}(\lambda - \mathcal H^{(0)})^{-1} \\ =\sum_{|\mathbf k|\leq r+2j} \sum_{\ell=1}^{N(\mathbf r)} D_{\mathbf k,\ell,\mathbf r} (\lambda+2\mathbf k\cdot a -\mathcal H^{(0)})^{-\ell},
\end{multline*} 
where $D_{\mathbf k,\ell,\mathbf r}$ is a differential operator of weight $r+2j\leq 3r$. 

By \eqref{e:Fr-phi}, we conclude that
\[
F_r= -\frac{1}{\pi }\sum_{{\mathbf r}\in I_{r}}\sum_{|\mathbf k|\leq 3r} \sum_{\ell=1}^{N(\mathbf r)} D_{\mathbf k,\ell,\mathbf r} {r}! \int_\CC \frac{\partial \tilde{\varphi}}{\partial \bar \lambda}(\lambda)(\lambda+2\mathbf k\cdot a -\mathcal H^{(0)})^{-\ell} d\mu d\nu.
\]
By differentiating the Helffer-Sjostrand formula \eqref{e:HS}, we get  
\[
\varphi^{(k)}(\mathcal H^{(0)})=-\frac{k!}{\pi }
\int_\CC \frac{\partial \tilde{\varphi}}{\partial \bar \lambda}(\lambda)(\lambda-\mathcal H^{(0)})^{-k-1}d\mu d\nu.
\]
It follows that
\[
F_r=\sum_{|\mathbf k|\leq 3r} \sum_{\ell=1}^{N_r} D_{\mathbf k,\ell} \varphi^{(\ell-1)}(\mathcal H^{(0)}-2\mathbf k\cdot a),
\]
where $D_{\mathbf k,\ell}$ is a differential operator of weight $3r$ and $N_r=\max_{\mathbf r\in I_r} N(\mathbf r)$. 

In particular, we have
\begin{multline}\label{e:fr2}
f_r(x_0)= \operatorname{tr} F_r(0,0)\\ =\sum_{|\mathbf k|\leq r} \sum_{\ell=1}^{N_r}\sum_{|\alpha|\leq 3r}\operatorname{tr} c_{\mathbf k,\ell.\alpha}(x_0) \nabla^\alpha_Z K_{\varphi^{(\ell-1)}(\mathcal H^{(0)}-2\mathbf k\cdot a)}(0,0),
\end{multline}
where $c_{\mathbf k,\ell.\alpha}(x_0)\in \operatorname{End}(E_{x_0})$ and $K_{\varphi(\mathcal H^{(0)})}(Z,Z^\prime)$ denotes the smooth kernel of the operator $\varphi(\mathcal H^{(0)})$.

Denote by $\pi_{\mu,x_0}\in \operatorname{End}(E_{x_0})$ the spectral projection of the self-adjoint operator $V(x_0)$, corresponding to the eigenvalue $V_\mu(x_0), \mu=1,\ldots,\operatorname{rank}(E)$. Using separation of variables and the Fourier transform, we infer that
\begin{multline}\label{e:Phi-kernel}
K_{\varphi (\mathcal H^{(0)})}(Z,Z^\prime)\\ =\frac{1}{(2\pi)^{d-2n}} \sum_{\mathbf k \in\ZZ^n_+}\sum_{\mu=1}^{\operatorname{rank}(E)} \mathcal P_{\Lambda_{\mathbf k}}(u,u^\prime)\pi_{\mu,x_0} \int_{\RR^{d-2n}} e^{i(v-v^\prime)\xi}\varphi(\Lambda_{\mathbf k,\mu}+|\xi|^2)d\xi,
\end{multline}
where $\mathcal P_{\Lambda_{\mathbf k}}(u,u^\prime)$ is the Schwartz kernel of the spectral projection of the operator $\mathcal L_0=\sum_{j=1}^n b_j b^{+}_j$ in $L^2(\mathbb R^{2n})$ corresponding to the eigenvalue $\Lambda_{\mathbf k}=\sum_{j=1}^n(2k_j+1) a_j$.   

By \eqref{e:Phi-kernel},  taking into account that $\mathcal P_{\Lambda_{\mathbf k}}(0,0)= \frac{1}{(2\pi)^n}\prod_{j=1}^na_j$,
for $d=2n$, we get
\[
K_{\varphi (\mathcal H^{(0)})}(Z,Z)=\frac{1}{(2\pi)^n}\prod_{j=1}^na_j(x_0) \sum_{\mathbf k \in\ZZ^n_+}\sum_{\mu=1}^{\operatorname{rank}(E)} \varphi(\Lambda_{\mathbf k,\mu}(x_0))\pi_{\mu,x_0},
\]
which gives \eqref{e:f0-2n} by \eqref{e:f0}, and, for $d>2n$, 
\begin{multline*}
K_{\varphi (\mathcal H^{(0)})}(Z,Z)\\ =\frac{1}{(2\pi)^{d-n}}\prod_{j=1}^na_j(x_0) \sum_{\mathbf k \in\ZZ^n_+}\sum_{\mu=1}^{\operatorname{rank}(E)} \int_{\RR^{d-2n}} \varphi(\Lambda_{\mathbf k,\mu}(x_0)+|\xi|^2)d\xi \pi_{\mu,x_0},
%=& \frac{|S^{d-2n-1}|}{2(2\pi)^{d-n}}\prod_{j=1}^na_j(x_0) \sum_{\mathbf k \in\ZZ^n_+}\sum_{\mu=1}^{\operatorname{rank}(E)} \int_0^{+\infty} \varphi(\tau)(\tau-\Lambda_{\mathbf k,\mu}(x_0))_+^{d/2-n-1}d\tau \pi_{\mu,x_0},
\end{multline*}
which gives \eqref{e:f0-d} by \eqref{e:f0}.

For an arbitrary $r$, by \eqref{e:Phi-kernel} and \eqref{e:fr2}, for $d=2n$, we get
\begin{equation}\label{e:fr1-form}
f_r(x_0)=\sum_{\mathbf k \in\ZZ^n_+}\sum_{\mu=1}^{\operatorname{rank}(E)} \sum_{\ell=1}^{N} P_{\mathbf k,\mu,\ell}(x_0) \varphi^{(\ell-1)}(\Lambda_{\mathbf k,\mu}(x_0)),
\end{equation}
where $P_{\mathbf k,\mu,\ell}$ has power growth in $\mathbf k$, and, for $d>2n$, we get
\begin{equation}\label{e:fr2-form}
f_r(x_0)=\sum_{\mathbf k \in\ZZ^n_+}\sum_{\mu=1}^{\operatorname{rank}(E)} \sum_{\ell=1}^{N} \int_{\RR^{d-2n}} P_{\mathbf k,\mu,\ell,x_0}(\xi) \varphi^{(\ell-1)}(\Lambda_{\mathbf k,\mu}(x_0)+|\xi|^2)d\xi,
\end{equation}
where $P_{\mathbf k,\mu,\ell,x_0}(\xi)$ is a polynomial of degree $3r$. %Introducing polar coordinates, we get
%\[
%f_r(x_0)=\sum_{\mathbf k \in\ZZ^n_+} \sum_{\ell=1}^{N}
%\int_0^{+\infty} \varphi^{(\ell-1)}(\tau)p_{\mathbf k,\ell,x_0}(\tau-\Lambda_{\mathbf k,\mu}(x_0))(\tau-\Lambda_{\mathbf k,\mu}(x_0))_+^{d/2-n-1} d\tau,
%\]
%where $p_{\mathbf k,\ell,x_0}$ is a polynomial of degree $3r$ given by
%\[
%p_{\mathbf k,\ell,x_0}(\tau)=\frac{1}{2}|S^{d-2n-1}|\int_{|\xi|=\tau} P_{\mathbf k,\ell,x_0}(\xi)d\xi, \quad \tau\geq 0.
%\]

Now we are ready to prove Theorem~\ref{t:local}. Assume that the rank of $B_{x_0}$ equals $d$ and an interval $(\alpha,\beta)$ does not contains any $\Lambda_{\mathbf k,\mu}(x_0)$ with  $\mathbf k \in \mathbb Z^n_+$ and $\mu=1,\ldots,\operatorname{rank}(E)$ and $\varphi\in \mathcal S(\RR)$ is supported in $(\alpha,\beta)$. By \eqref{e:fr1-form}, we infer that $f_r(x_0)=0$ for any $r=0,1,\ldots,$ that proves \eqref{e:loc1}. To prove \eqref{e:loc2}, given an interval $[\alpha,\beta]$ does not contains any $\Lambda_{\mathbf k,\mu}(x_0)$ with  $\mathbf k \in \mathbb Z^n_+$ and $\mu=1,\ldots,\operatorname{rank}(E)$, we choose a function $\varphi\in \mathcal S(\RR)$, supported in an interval $(\alpha_1,\beta_1)$, which contains $[\alpha,\beta]$ and does not contains any $\Lambda_{\mathbf k,\mu}(x_0)$ with  $\mathbf k \in \mathbb Z^n_+$ and $\mu=1,\ldots,\operatorname{rank}(E)$, such that $\chi_{[\alpha,\beta]}(x)\leq \varphi(x)$ for any $x\in \mathbb R$. Then 
\[
E_{[\alpha,\beta]}=\chi_{[\alpha,\beta]}(H_p)\leq \varphi(H_p)
\] 
and 
\[
E_{[\alpha,\beta]}(x,x)\leq K_{\varphi(H_p)}(x,x)
\]
for any $x\in X$. By \eqref{e:loc1}, this gives \eqref{e:loc2}. 

%To prove \eqref{e:loc2} for an arbitrary $k$, we write 
%\[
%(H_p+c)^{N}E_{[\alpha,\beta]}(H_p+c)^{N} \leq (H_p+c)^{N}\varphi(H_p)(H_p+c)^{N}
%\]
%with any $N\in \mathbb N$, where $c$ is a sufficienly large constant such thet $H_p+c>0$ for any $p$, and accordingly 
%\[
%(H_{p,x}+c)^{N}(H_{p,y}+c)^{N}E_{[\alpha,\beta]}(x,y)\left|_{y=x}\right. \leq K_{(H_p+c)^{2N}\varphi(H_p)}(x,x)
%\]
%
%then the Schwartz kernel of the spectral projection $E_{[\alpha,\beta]}$ of the operator $H_p$ associated with $[\alpha,\beta]$ satisfies
%\[
%\left|E_{[\alpha,\beta]}(x_0,x_0)\right|_{{C}^k}=\mathcal O(p^{-\infty}), \quad p\to \infty.
%\]

\end{document}